 \numberwithin{equation}{section}
\newcommand{\excise}[1]{}
\newtheorem{theorem}{Theorem}[section]
\newtheorem{proposition}[theorem]{Proposition}
\newtheorem{lemma}[theorem]{Lemma}
\newtheorem{corollary}[theorem]{Corollary}
\theoremstyle{definition}
\newtheorem{definition}[theorem]{Definition}
\newtheorem{example}[theorem]{Example}
\newtheorem{remark}[theorem]{Remark}
\newtheorem{conv}[theorem]{Convention}
\numberwithin{equation}{section}
\newcommand\<{\langle}
\newcommand\NN{{\mathbb N}}
\newcommand\QQ{{\mathbb Q}}
\newcommand\RR{{\mathbb R}}
\newcommand\ZZ{{\mathbb Z}}
\newcommand\del{\partial}
\newcommand\rank{{\rm rank\,}}
\newcommand\Sat{{\rm Sat}}
\renewcommand\>{\rangle}
\def\endrk{\hfill$\hexagon$}
\newcommand*{\defeq}{\mathrel{\vcenter{\baselineskip0.5ex \lineskiplimit0pt
                     \hbox{\scriptsize.}\hbox{\scriptsize.}}}%
                     =}
\newcommand\kk{\Bbbk}
\newcommand\mm{{\mathfrak m}}
\newcommand\minus{\smallsetminus}
\def\ol#1{{\overline {#1}}}
\DeclareMathOperator\Ass{Ass} 
\DeclareMathOperator\sat{sat} 
\DeclareMathOperator\Hull{Hull} 
\DeclareMathOperator\supp{supp} 
\newcommand\toral{\textnormal{toral}}
\begin{document}

\mbox{}
\title{Some algebraic aspects of mesoprimary decomposition\qquad}
\author{Laura Felicia Matusevich}
\address{Mathematics Department\\Texas A\&M University\\College Station, TX 77843}
\email{laura@math.tamu.edu}
\author{Christopher O'Neill}
\address{Mathematics Department\\UC Davis\\Davis, CA 95616}
\email{coneill@math.ucdavis.edu}

\thanks{LFM was partially supported by NSF grant DMS-1500832.}
\subjclass[2010]{Primary: 05E40, 13A02. Secondary: 13F99, 13P99.}

\date{\today}

\begin{abstract}
\hspace{-2.05032pt}
Recent results of Kahle and Miller give a method of constructing 
primary decompositions of binomial ideals by first constructing 
``mesoprimary decompositions'' determined by their underlying 
monoid congruences.  Monoid congruences (and therefore, binomial ideals)
can present many subtle behaviors that must be 
carefully accounted for in order to produce general results,
and this makes the theory complicated.
In this paper, we examine their results in the presence of 
a positive $A$-grading, where certain pathologies are avoided 
and the theory becomes more accessible.  
Our approach is algebraic: while key notions for mesoprimary decomposition 
are developed first from a combinatorial point of view, here we
state definitions and results in algebraic terms, which are moreover
significantly simplified due to our (slightly) restricted setting. In
the case of toral components (which are well-behaved with respect to
the $A$-grading), we are able to obtain further simplifications under
additional assumptions.
We also provide counterexamples to two open questions, identifying 
(i) a binomial ideal whose hull is not binomial, 
answering a question of Eisenbud and Sturmfels, and 
(ii) a binomial ideal $I$ for which $I_\textnormal{toral}$ is not
binomial, answering a question of Dickenstein, Miller and the first
author.   
\end{abstract}
\maketitle

\section{Introduction}\label{s:intro}

A \emph{binomial} is a polynomial with at most two terms; a
\emph{binomial ideal} is an ideal generated by binomials. 
Monomial ideals, well known
as objects with rich combinatorial structure, are also
binomial. Toric ideals, also of much combinatorial interest, are
binomial as well.

Binomial ideals in general are known for being constrained in their
geometry and algebra: the irreducible components of a variety
defined by binomials
(over an algebraically closed field) are toric
varieties. More precisely, if the base field is algebraically closed,
the associated primes and primary components of binomial ideals are
binomial (and binomial prime ideals are isomorphic to toric ideals by
rescaling the variables). These results form the core of the
article~\cite{ES}.

The combinatorial study of binomial primary decomposition was started
in~\cite{dmm}, but the results in 
that article require the base field to be algebraically closed of
characteristic zero. 
To completely
eliminate assumptions on the base field, a new kind of decomposition,
called \emph{mesoprimary decomposition}, from which a primary
decomposition can be easily obtained, was introduced in~\cite{kmmeso}.
The main theme in~\cite{kmmeso} is that the combinatorial structures
underlying binomial ideals are \emph{monoid congruences}, that is, 
equivalence relations on a monoid that are compatible with the additive
structure. 

The starting point of~\cite{kmmeso} is that, when performing the
primary decomposition of a binomial ideal, the base field plays a role
only when one encounters lattice ideals 
(see Definition~\ref{def:latticeIdeal} and Remark~\ref{remark:BaseFieldLatticeIdeals} for details).
Indeed, one can decompose a binomial ideal into structurally simpler binomial
ideals without regard for the base field. For
instance,~\cite[Theorem~6.2]{ES} provides a base field independent
decomposition of a binomial ideal into cellular binomial ideals (Definition~\ref{def:cellular})
From there, one can proceed, again
without field assumptions, to more refined \emph{unmixed decompositions}
(see~\cite[Corollary~8.2]{ES} for characteristic zero
and~\cite[Section~4]{OS}, \cite[Theorem~5.1]{EM} for results
without field assumptions). From unmixed decompositions, one can
fairly explicitly obtain primary decompositions. However, unmixed decompositions
are not the most refined possible binomial decompositions, nor do they
completely reveal the combinatorial structure of the underlying
binomial ideal; the mesoprimary decompositions of~\cite{kmmeso} fulfil those goals.

Monoid congruences (and therefore, binomial ideals)
can present many subtle behaviors. To produce general results, these
must be carefully accounted for, and this makes the theory complicated.
In order to simplify the definitions and results on monoid congruences
required to perform mesoprimary decomposition, 
we restrict our attention in this article to the important class
of \emph{positively graded binomial ideals}.
Under this assumption, certain pathologies
for the corresponding congruences are avoided, and the theory becomes
more accessible.  Our approach is algebraic: while
in~\cite{kmmeso}, key notions are developed first from a combinatorial
point of view, here we
restate definitions and results in algebraic terms, which are moreover
significantly simplified due to our (slightly) restricted setting.
These definitions can be found in Section~\ref{s:mesodecomp}, 
after we review the necessary background on binomial ideals 
from \cite{ES} in Section~\ref{s:background}.  

The remainder of the paper concerns results and ideas 
from \cite{dmm,dmm2} that identify, in~the $A$-homogeneous setting, 
certain primary components (called \emph{toral} components)
that inherit sufficient combinatorial structure from the grading 
to make them easier to compute.  
One of the main goals of this project was to obtain 
analogous methods for computing \emph{toral} mesoprimary components 
(Definition~\ref{d:toral}).  
Much to our surprise, the combinatorial methods explored 
in~\cite{dmm} and~\cite{kmmeso} appear to be somehow incompatible;
each utilizes some underlying combinatorial structure
to simplify computation of primary decomposition, 
but in sufficiently different ways that it is difficult 
to simultaneously benefit from both outside of highly restricted cases.  

Sections~\ref{sec:setTo1} and~\ref{sec:Itoral} 
contain some mesoprimary analogs of results from \cite{dmm} 
in special cases, 
as well as examples demonstrating why more general 
results in this direction are difficult to obtain.  
We also idenfity in Example~\ref{e:nonbinomialItoral} 
a binomial ideal $I$ with the property that 
the intersection of the toral primary components of $I$ 
is not a binomial ideal,
thus answering a question posed by the authors of \cite{dmm}.  

We close this section by addressing a question of Eisenbud and
Sturmfels. We recall that the \emph{Hull} of an ideal $I$, denoted
$\Hull(I)$ is the intersection of all the minimal primary
components of $I$. Corollary~6.5 of~\cite{ES} (see
also~\cite[Theorem~2.10]{EM}) states that the Hull of 
a cellular binomial ideal is binomial.

Problem~6.6 in~\cite{ES} asks: 
\begin{quote}
Is $\Hull(I)$ binomial for every (not necessarily cellular) binomial
ideal $I$?
\end{quote}
We provide a negative answer to this question in the following example.

\begin{example}
\label{e:nonbinomialhull}
The binomial ideal
$$\begin{array}{rcl}
I 
&=& \<x_4^2 - 1, x_1^2(x_4 - 1), x_3(x_4 - 1), x_3(x_1 - x_2), x_1^2 - x_1x_2, x_1x_2 - x_2^2, x_1^3, x_1^2x_3\> \\
&=& \<x_4 + 1, x_1^2, x_1x_2, x_2^2, x_3\> \cap \<x_4 - 1, x_1 - x_2, x_1^2\> \cap \<x_4 - 1, x_1^2 - x_1x_2, x_1x_2 - x_2^2, x_1^3, x_3\>
\end{array}$$
appeared in \cite[Example~16.10]{kmmeso}. It has three associated
primes
\[
\< x_1,x_2,x_3,x_4+1\> \text{ (minimal)}, \; \<x_1,x_2,x_4-1\> \text{
  (minimal)}, \; \text{ and } \<x_1,x_2,x_3, x_4-1\> \text{ (embedded)}.
\]

The intersection of the minimal primary components of $I$ is 
$$\Hull(I) = \<x_4^2 - 1, x_3x_4 - x_3, x_1x_4 - x_2x_4 + x_1 - x_2,
x_1x_3 - x_2x_3, x_2^2, x_1x_2, x_1^2\>,$$
whose non-binomiality can be verified with \texttt{Macaulay2} 
package \texttt{Binomials}, or by simply computing a reduced Gr\"obner basis.  
\endrk
\end{example}

\subsection*{Acknowledgements}
We are grateful to Alicia Dickenstein, Thomas Kahle and Ezra Miller
for many inspiring conversations; we also thank Ezra Miller, 
Ignacio Ojeda, and two anonymous referees for their useful suggestions 
on a previous version of this article.

\section{Binomial primary decomposition}\label{s:background}

In this section, we give background on binomial ideals and recall terminology from \cite{ES}.  

\subsection*{Conventions}

Throughout this article we use $\NN = \{0,1,2,\dots\}$.  
Unless otherwise stated, $\kk$ denotes an arbitrary field. We denote
$\kk[\NN^n]$ the polynomial ring in $n$ variables $x_1,\dots,x_n$.  Given
$\sigma \subset [n] = \{1,2,\ldots,n\}$, write $\kk[\NN^\sigma]$ for
the subring of $\kk[\NN^n]$ generated by $x_i$ for $i \in \sigma$, and
$\mm_\sigma = \<x_i \mid i \in \sigma\>$ for the maximal monomial
ideal in $\kk[\NN^\sigma]$.  If $\sigma \subset [n]$, we denote $\sigma^c = [n] \minus \sigma$.

Throughout this article, a \emph{lattice} is a finitely generated free
abelian group, and a \emph{character} on a lattice $L$ is a group
homomorphism $\rho:L\to \kk^*$.

\subsection*{Cellular ideals, lattice ideals, primary decomposition}

We first introduce an important class of binomial ideals.

\begin{definition}
\label{def:cellular}
A binomial ideal $I\subset \kk[\NN^n]$ is \emph{cellular} if each $x_i$ is
either nilpotent or a nonzerodivisor modulo $I$ for $i=1,\dots,n$.
If $I$ is cellular, the nonzerodivisor variables modulo $I$ are called the
\emph{cellular variables} modulo $I$.
If  $\sigma \subseteq [n]$ indexes 
the cellular variables modulo $I$, then
$I$ is \emph{$\sigma$-cellular}.
\end{definition}

The following result underscores the importance of cellular binomial ideals.

\begin{theorem}{\cite[Theorem~6.2]{ES}}
\label{thm:binomialCellularDec}
Every binomial ideal in $\kk[\NN^n]$ can be expressed as a finite intersection of cellular binomial ideals.
\end{theorem}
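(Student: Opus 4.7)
My plan is to argue by Noetherian induction on the set of binomial ideals containing $I$, splitting any non-cellular binomial ideal into two strictly larger binomial ideals. If $I$ is already cellular there is nothing to prove; otherwise some variable $x_i$ is a zerodivisor modulo $I$ that fails to be nilpotent modulo $I$, and I would split $I$ along $x_i$.

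The splitting I have in mind is the standard colon/sum decomposition. Since $\kk[\NN^n]$ is Noetherian, the chain $(I : x_i) \subseteq (I : x_i^2) \subseteq \cdots$ stabilizes at some exponent $N$, and
\[
I = (I : x_i^\infty) \cap \bigl(I + \<x_i^N\>\bigr).
\]
The inclusion $\subseteq$ is clear. For $\supseteq$, take $g$ in the right-hand side and write $g = h + x_i^N k$ with $h \in I$; then $x_i^N g \in I$ forces $x_i^{2N} k \in I$, so $k \in (I : x_i^{2N}) = (I : x_i^N)$, whence $x_i^N k \in I$ and $g \in I$. Because $x_i$ is a zerodivisor modulo $I$ but not nilpotent, both factors on the right strictly contain $I$: the first because $\mathrm{ann}(x_i^\infty) \not\subseteq I$, and the second because $x_i^N \notin I$.

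It remains to check that both factors are binomial. The ideal $I + \<x_i^N\>$ is visibly binomial; the binomiality of $(I : x_i^\infty)$, on the other hand, is the step I expect to be the main obstacle, as it uses the specific structure of binomial ideals rather than general commutative algebra. Here I would invoke the Eisenbud--Sturmfels fact that the saturation of a binomial ideal by a monomial is again binomial, which is proved via a Gr\"obner-basis argument using an elimination order on $x_i$ (or, alternatively, via $\kk^*$-character techniques). Granting this, Noetherian induction on the poset of binomial ideals containing $I$ completes the proof: each of the two strictly larger binomial ideals produced by the splitting decomposes as a finite intersection of cellular binomial ideals, and intersecting these decompositions yields one for $I$. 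Every remaining ingredient---the colon splitting, termination by ascending chain condition, and the dichotomy characterizing cellularity---is formal.
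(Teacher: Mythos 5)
Your argument is correct and is essentially the proof of \cite[Theorem~6.2]{ES} that the paper cites: Noetherian induction, splitting a non-cellular $I$ via $I = (I : x_i^\infty) \cap (I + \<x_i^N\>)$ for a variable $x_i$ that is a zerodivisor but not nilpotent modulo $I$, with binomiality of the saturation supplied by the Eisenbud--Sturmfels Gr\"obner-basis lemma. All the steps you flag as formal are indeed formal, and the one nontrivial ingredient you isolate (binomiality of $(I:x_i^\infty)$) is exactly the right one.
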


Another important class of binomial ideals are lattice ideals.

\begin{definition}
\label{def:latticeIdeal}
Let $L \subseteq \ZZ^n$ be a lattice and $\rho:L\to \kk^*$ a character
on $L$. The \emph{lattice ideal corresponding to $L$ and $\rho$} is:
\[
I(\rho) \defeq \< x^u - \rho(u-v) x^v \mid u,v \in \NN^n, u-v \in L \>.
\]
We omit $L$ from the notation for a lattice ideal, since it is
understood that $L$ is specified when the character $\rho$ is given.
An ideal $I$ is called a \emph{lattice ideal} if there exists a
character $\rho$ on a lattice $L$ such that $I=I(\rho)$. 
\end{definition}

The following result is a characterization of lattice ideals in terms
of cellular binomial ideals.

\begin{lemma}{\cite[Corollary~2.5]{ES}}
Let $I\subset \kk[\NN^n]$ be a binomial ideal. Then $I$ is
$[n]$-cellular if and only if $I$ is a lattice ideal.
\end{lemma}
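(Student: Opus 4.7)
The plan is to handle the two directions separately, using the induced lattice grading for the easy direction and constructing the lattice and character explicitly for the harder direction. For $(\Leftarrow)$, suppose $I = I(\rho)$ for a character $\rho \colon L \to \kk^*$. I would set $\deg(x^u) := u + L \in \ZZ^n/L$; this descends to a $\ZZ^n/L$-grading on $\kk[\NN^n]/I$ because every defining generator of $I(\rho)$ is homogeneous in this grading. Each graded component indexed by a coset meeting $\NN^n$ is then one-dimensional, since any two monomials $x^u, x^v$ in the same coset satisfy $x^u \equiv \rho(u-v) x^v \pmod I$. In particular $I$ contains no monomial, and multiplication by $x_i$ carries each nonzero graded component bijectively onto another nonzero graded component; hence $x_i$ is a nonzerodivisor modulo $I$ for each $i$, so $I$ is $[n]$-cellular.

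For $(\Rightarrow)$, suppose $I$ is an $[n]$-cellular binomial ideal (with $I \ne \<1\>$). First I would observe that $I$ contains no monomial: if $x^u \in I$ with $u_i > 0$, then cellularity of $x_i$ allows cancellation to give $x^{u-e_i} \in I$, and iterating yields $1 \in I$. Any binomial generating set of $I$ therefore consists of pure binomials $x^{u_j} - c_j x^{v_j}$ with $c_j \in \kk^*$. Next, define
\[
L := \{\, u - v \in \ZZ^n : x^u - c\, x^v \in I \text{ for some } c \in \kk^* \,\} \qquad \text{and} \qquad \rho(u - v) := c,
\]
and verify that $L$ is a subgroup of $\ZZ^n$ and that $\rho$ is a well-defined character. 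Subgroup closure under addition and multiplicativity of $\rho$ follow from the identity
\[
x^{u+u'} - cc'\, x^{v+v'} \;=\; x^{u'}(x^u - c\, x^v) + c\, x^v (x^{u'} - c'\, x^{v'}) \;\in\; I,
\]
while closure under negation follows from $x^v - c^{-1} x^u = -c^{-1}(x^u - c\, x^v) \in I$. Finally $I(\rho) \subseteq I$ is immediate, and the reverse containment holds because each of the chosen binomial generators of $I$ lies in $I(\rho)$ by construction.

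The delicate step---and the one I expect to require the most care---is the well-definedness of $\rho$. If $x^u - c\, x^v$ and $x^{u'} - c'\, x^{v'}$ both lie in $I$ with $u - v = u' - v'$, multiplying by $x^{v'}$ and $x^v$ respectively produces $x^{u+v'} - c\, x^{v+v'}$ and $x^{u+v'} - c'\, x^{v+v'}$ in $I$; their difference $(c' - c)\, x^{v+v'}$ is a scalar multiple of the monomial $x^{v+v'}$, and the absence of monomials in $I$---which is precisely what $[n]$-cellularity provides---forces $c = c'$.
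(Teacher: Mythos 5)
Your forward direction ($[n]$-cellular $\Rightarrow$ lattice ideal) is essentially correct and complete: the cancellation argument showing $I$ contains no monomial, the subgroup and multiplicativity verifications, and in particular the well-definedness of $\rho$ are all fine as written. (The paper gives no proof of this lemma at all---it simply cites \cite[Corollary~2.5]{ES}---so there is nothing to compare against there. One small remark: ``$I(\rho)\subseteq I$ is immediate'' actually uses cellularity once more, since from $a-b=u-v$ one only gets $x^v(x^a-c\,x^b)=x^b(x^u-c\,x^v)\in I$ and must then cancel $x^v$.)

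The genuine gap is in the reverse direction, at the sentence ``Each graded component \ldots is then one-dimensional \ldots In particular $I$ contains no monomial.'' The justification you offer---any two monomials in the same coset of $L$ are proportional modulo $I(\rho)$---only shows that each $\ZZ^n/L$-graded component of $\kk[\NN^n]/I(\rho)$ has dimension \emph{at most} one. The claim that $I(\rho)$ contains no monomial is exactly the claim that these components are nonzero, i.e.\ have dimension \emph{exactly} one, and this is the nontrivial content of the direction: a priori a coset could ``collapse,'' with all of its monomials landing in $I(\rho)$, and then multiplication by $x_i$ need not be injective on the adjacent components. To close the gap, pass to the Laurent ring: $\kk[\ZZ^n]$ is free as a module over $\kk[L]$ (choose coset representatives of $\ZZ^n/L$), and $I(\rho)\kk[\ZZ^n]$ is generated by the elements $x^w-\rho(w)$ for $w\in L$, which generate the kernel of the map $\kk[L]\to\kk$ sending $x^w\mapsto\rho(w)$; hence $\kk[\ZZ^n]/I(\rho)\kk[\ZZ^n]\cong\kk[\ZZ^n]\otimes_{\kk[L]}\kk$ is a nonzero free $\kk$-module in which every monomial maps to a nonzero scalar multiple of a basis element. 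Since $I(\rho)\subseteq I(\rho)\kk[\ZZ^n]$, no monomial lies in $I(\rho)$, and your argument then goes through. (Beware the tempting shortcut of extending $\rho$ to a character of all of $\ZZ^n$ and evaluating: such an extension need not exist over the arbitrary fields the paper allows.)
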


If $L$ is a lattice in $\ZZ^n$, we define its \emph{saturation} to be
$\Sat(L) \defeq (\QQ\otimes_{\ZZ}L) \cap \ZZ^n$; $L$ is \emph{saturated} if $L = \Sat(L)$. Saturated lattices correspond to prime
binomial ideals when $\kk$ is algebraically closed, as the following results show.

\begin{lemma}{\cite[Theorem~2.1]{ES}}
Assume $\kk$ is algebraically closed.
A lattice ideal in $\kk[\NN^n]$ is prime if and only if the underlying lattice is
saturated.
\end{lemma}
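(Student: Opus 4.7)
My plan is to prove both implications after a reduction to the case of the trivial character, exploiting the natural $\ZZ^n/L$-grading on $\kk[\NN^n]$.

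First I would reduce to the trivial character. Because $\kk$ is algebraically closed, $\kk^*$ is divisible, so any character $\rho: L \to \kk^*$ extends to $\tilde\rho: \ZZ^n \to \kk^*$. The $\kk$-algebra automorphism of $\kk[\NN^n]$ defined by $x_i \mapsto \tilde\rho(e_i)^{-1} x_i$ carries $I(\rho)$ to the ideal $I(1_L)$ associated to the trivial character on $L$. Since this is an isomorphism, $I(\rho)$ is prime if and only if $I(1_L)$ is, so I may assume $\rho \equiv 1$.

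For the $(\Leftarrow)$ direction, I would consider the $\kk$-algebra homomorphism $\phi : \kk[\NN^n] \to \kk[\ZZ^n/L]$ given by $x_i \mapsto [e_i]$. The relations $x^u - x^v$ with $u-v \in L$ clearly lie in $\ker \phi$ and generate it, so $\ker \phi = I(1_L)$. If $L$ is saturated, then $\ZZ^n/L$ is torsion-free and hence free abelian, so $\kk[\ZZ^n/L]$ is a Laurent polynomial ring over $\kk$, in particular a domain, and $I(1_L) = \ker \phi$ is prime.

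For the $(\Rightarrow)$ direction I argue by contraposition. Suppose $L$ is not saturated, so there is $u \in \Sat(L) \setminus L$ with $mu \in L$ for some prime $m$. Writing $u = u^+ - u^-$ with $u^\pm \in \NN^n$ of disjoint support, I get $x^{mu^+} - x^{mu^-} \in I(1_L)$. If $\mathrm{char}(\kk) \nmid m$, then $\kk$ contains a primitive $m$-th root of unity $\zeta$, and this element factors as $\prod_{i=0}^{m-1}(x^{u^+} - \zeta^i x^{u^-})$. If $\mathrm{char}(\kk) = m$, then instead $x^{mu^+} - x^{mu^-} = (x^{u^+} - x^{u^-})^m$. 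In each case, it suffices to show that the factor $x^{u^+} - \lambda x^{u^-}$ does not lie in $I(1_L)$ for any $\lambda \in \kk^*$, which then defeats primality (or even reducedness in the second case).

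The main technical step, and the place where care is needed, is verifying that these binomials lie outside $I(1_L)$. I would endow $\kk[\NN^n]$ with the $\ZZ^n/L$-grading $\deg(x^v) = [v]$; then each generator $x^a - x^b$ of $I(1_L)$ is homogeneous (as $[a]=[b]$), so $I(1_L)$ is $\ZZ^n/L$-graded. A direct dimension count shows that the degree-$\alpha$ component of $I(1_L)$ has codimension exactly one in the degree-$\alpha$ component of $\kk[\NN^n]$ and contains no monomial basis vector; hence $I(1_L)$ contains no monomial. Finally, since $u \notin L$ we have $[u^+] \neq [u^-]$ in $\ZZ^n/L$, so $x^{u^+}$ and $\lambda x^{u^-}$ have distinct $\ZZ^n/L$-degrees; if their difference were in the $\ZZ^n/L$-graded ideal $I(1_L)$, both monomials would be as well, contradicting the no-monomials property. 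This grading argument is the crux: once it is in place, the two characteristic cases close the proof in parallel.
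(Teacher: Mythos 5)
The paper states this lemma as a citation of \cite[Theorem~2.1]{ES} and gives no proof of its own, so there is nothing internal to compare against; judged on its merits, your argument is correct and complete. Your route is essentially the standard Eisenbud--Sturmfels one: rescale variables (using divisibility of $\kk^*$) to reduce to the trivial character, identify $I(1_L)$ with the kernel of $\kk[\NN^n]\to\kk[\ZZ^n/L]$ to get primeness from saturation, and use torsion in $\Sat(L)/L$ to manufacture a factorization of an element of $I(1_L)$ whose factors lie outside the ideal. The one place where you improve on the usual presentation is the converse: \cite{ES} passes to the Laurent polynomial ring and uses that a lattice ideal is the contraction of its extension there, whereas your $\ZZ^n/L$-grading argument (each graded piece of $I(1_L)$ is the coefficient-sum-zero hyperplane, hence contains no monomial, hence a binomial $x^{u^+}-\lambda x^{u^-}$ with $[u^+]\neq[u^-]$ cannot lie in the homogeneous ideal $I(1_L)$) stays entirely inside $\kk[\NN^n]$ and is self-contained. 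Two points worth making explicit when you write this up: the existence of $u\in\Sat(L)\setminus L$ with $mu\in L$ for $m$ prime comes from Cauchy's theorem applied to the finite group $\Sat(L)/L$ (finite because $L$ and $\Sat(L)$ have equal rank); and the claim that the binomials $x^u-x^v$ with $u-v\in L$ \emph{span} $I(1_L)$ as a $\kk$-vector space (not merely generate it as an ideal) is what makes both the kernel identification and the ``no monomials'' count immediate --- it holds because a monomial times such a binomial is again such a binomial.
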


\begin{theorem}{\cite[Corollary~2.6]{ES}}
Assume $\kk$ is algebraically closed.
A binomial ideal in $\kk[\NN^n]$ is prime if and only if it is of the
form $\mm_{\sigma^c} + \kk[\NN^n] I$ where $I$ is a prime lattice
ideal in $\kk[\NN^\sigma]$. From~now on, where it causes no confusion, we suppress $\kk[\NN^n]$ and
use $\mm_{\sigma^c} + I$ instead. 
\end{theorem}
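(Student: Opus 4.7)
The plan is to establish both implications, with the forward direction carrying the substance.

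For the backward direction, suppose $J = \mm_{\sigma^c} + \kk[\NN^n]I$ where $I$ is a prime lattice ideal in $\kk[\NN^\sigma]$. The surjection $\pi\colon\kk[\NN^n] \onto \kk[\NN^\sigma]$ sending $x_i \mapsto 0$ for $i \in \sigma^c$ has kernel $\mm_{\sigma^c}$ and carries $J$ onto $I$, so it induces an isomorphism $\kk[\NN^n]/J \cong \kk[\NN^\sigma]/I$. The right-hand quotient is a domain by primality of $I$, so $J$ is prime.

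For the forward direction, let $J \subset \kk[\NN^n]$ be a prime binomial ideal. My first step is to show $J$ is cellular: since $J = \sqrt{J}$, each variable $x_i$ is either in $J$ (hence nilpotent modulo $J$) or outside $J$, in which case the primality of $J$ forces $x_i$ to be a nonzerodivisor modulo $J$. Let $\sigma$ be the set of cellular indices, so that $\mm_{\sigma^c} \subseteq J$. Using the $\kk$-vector space decomposition $\kk[\NN^n] = \kk[\NN^\sigma] \oplus \mm_{\sigma^c}$, any $f \in J$ splits uniquely as $f = g + h$ with $g \in \kk[\NN^\sigma]$ and $h \in \mm_{\sigma^c} \subseteq J$; then $g = f - h$ lies in $I \defeq J \cap \kk[\NN^\sigma]$, which gives the desired decomposition $J = \mm_{\sigma^c} + I$.

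It remains to verify that $I$ is a prime lattice ideal in $\kk[\NN^\sigma]$. Applying the projection $\pi$ to a binomial generating set of $J$ produces a binomial generating set of $\pi(J) = I$, since $\pi$ sends each binomial or monomial generator to a binomial, a monomial, or zero in $\kk[\NN^\sigma]$; hence $I$ is a binomial ideal. The isomorphism $\kk[\NN^\sigma]/I \cong \kk[\NN^n]/J$ transports the nonzerodivisor status of each $x_i$ with $i \in \sigma$, so $I$ is $\sigma$-cellular and therefore a lattice ideal by the preceding lemma; the same isomorphism transports primality, so $I$ is prime. The only conceptually non-trivial step is the initial observation that binomial primes are automatically cellular, after which the structural decomposition is essentially formal. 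The algebraically closed hypothesis plays no direct role in this argument; it enters only implicitly through the previous lemma characterizing prime lattice ideals.
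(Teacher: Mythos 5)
The paper states this result only as a citation of \cite[Corollary~2.6]{ES} and supplies no proof of its own; your argument is correct and follows the standard route of that reference: a binomial prime is automatically cellular, hence splits as $\mm_{\sigma^c}$ plus the extension of $I = J \cap \kk[\NN^\sigma]$, and $I$ is a prime lattice ideal by the preceding lemma since every variable of $\kk[\NN^\sigma]$ is a nonzerodivisor modulo $I$. Your closing observation is also accurate: algebraic closedness plays no direct role in this equivalence as stated, entering only in the finer description of \emph{which} lattice ideals are prime (saturated lattices with a character).
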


The primary decomposition of lattice ideals can be done explicitly,
when the base field is algebraically closed. Stating this result is
our next task.

Let $L$ be a lattice in $\ZZ^n$, let $\rho:L\to \kk^*$ a character
on $L$, and let $p$ be a prime number.
We define $\Sat_{p}(L)$ and $\Sat'_{p}(L)$ to be the largest
sublattices of $\Sat(L)$ containing $L$ so that
$|\Sat_p(L)/L | = p^k$ for some $k \in \ZZ$, and
$| \Sat'_p(L)/L | = g$ where $(p, g) = 1$. We also adopt the
convention that $\Sat_0(L) \defeq L$ and $\Sat'_0(L)
\defeq \Sat(L)$. 

\begin{theorem}[{\cite[Corollary~2.2]{ES}}]
\label{thm:latticePrimaryDecomposition}
Let $\kk$ be algebraically closed field of characteristic $p \geq
0$, and let $L, \rho$ as above.
There are $g = | \Sat'_p(L)/L|$ distinct characters
$\rho_1,\dots,\rho_g :\Sat'_p(L) \to \kk^*$ that extend $\rho$. For each
$i=1,\dots,g$, there exists a unique partial character $\chi_{i}$
that extends $\rho_{i}$ to  $\Sat(L_{\rho})$. (If $p=0$, $\chi_i =
\rho_i$ for all $i=1,\dots,g$.)
The associated primes of the lattice ideal $I(\rho)$ are
$I(\chi_1),\dots,I(\chi_g)$, they are all minimal, and have the same
codimension $\rank(L)$. For 
each $i=1,\dots,g$, the ideal $I(\rho_i)$ is $I(\chi_i)$-primary, and 
\[
I(\rho) = \bigcap_{i=1}^g I(\rho_i)
\]
is the minimal primary decomposition of $I(\rho)$. 
\qed
\end{theorem}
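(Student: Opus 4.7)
The plan is to prove the statement in stages: first construct the $g$ characters $\rho_i$ and their unique extensions $\chi_i$; then verify the primary/radical structure of each $I(\rho_i)$; and finally establish the intersection formula and its minimality.  For the character count, I would use Pontryagin duality.  Since $\kk$ is algebraically closed, $\kk^*$ is a divisible abelian group, hence injective as a $\ZZ$-module, so every character of a subgroup extends to the ambient group.  Applied to $L \subseteq \Sat'_p(L)$, the set of extensions of $\rho$ is a torsor over $\Hom(\Sat'_p(L)/L, \kk^*)$; since the finite abelian group $\Sat'_p(L)/L$ has order coprime to $\mathrm{char}(\kk)$, its character group into $\kk^*$ has order exactly $g = |\Sat'_p(L)/L|$, yielding $\rho_1, \dots, \rho_g$.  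For the further extension to $\Sat(L)$, the quotient $Q \defeq \Sat(L)/\Sat'_p(L)$ is a $p$-group (trivial when $p=0$); existence again follows from divisibility, and uniqueness from $\Hom(Q, \kk^*) = 0$, which in positive characteristic holds because the Frobenius endomorphism is injective on $\kk^*$.

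For the primary structure, each $\chi_i$ is a partial character on the saturated lattice $\Sat(L)$, so $I(\chi_i)$ is prime by Theorem~2.1 of~\cite{ES}, and its codimension equals $\rank(\Sat(L)) = \rank(L)$.  To identify $\sqrt{I(\rho_i)}$ with $I(\chi_i)$, the containment $I(\chi_i) \subseteq \sqrt{I(\rho_i)}$ is immediate in characteristic $0$; in positive characteristic, any generator $x^u - \chi_i(u-v)\, x^v$ of $I(\chi_i)$ with $u - v \in \Sat(L)$ satisfies
\[
(x^u - \chi_i(u-v)\, x^v)^{p^k} = x^{p^k u} - \chi_i(u-v)^{p^k} x^{p^k v} \in I(\rho_i)
\]
for any $k$ with $p^k(u-v) \in \Sat'_p(L)$, since $\chi_i$ extends $\rho_i$.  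The reverse containment follows because $I(\chi_i)$ is prime and contains $I(\rho_i)$.  For the primariness of $I(\rho_i)$, I would exploit the natural $\ZZ^n/L$-grading of $\kk[\NN^n]/I(\rho_i)$, in which each graded piece is at most one-dimensional; combined with the radical computation, this forces every zero divisor modulo $I(\rho_i)$ to lie in $I(\chi_i)$, and hence to be nilpotent.

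For the intersection formula, $I(\rho) \subseteq \bigcap_i I(\rho_i)$ is immediate since each $\rho_i$ extends $\rho$.  For the reverse, a pure-difference binomial $x^u - c\, x^v$ lies in every $I(\rho_i)$ iff $u - v \in \Sat'_p(L)$ and $\rho_i(u-v) = c$ for all $i$; since the $\rho_i$ differ pairwise by characters of $\Sat'_p(L)/L$ and these characters separate points of this finite abelian group, one is forced into $u - v \in L$ and $c = \rho(u-v)$, so $x^u - c\, x^v \in I(\rho)$.  The main obstacle is promoting this binomial-level identity to an identity of ideals; I would address this by appealing to the fact that intersections of binomial ideals are binomial, so both sides are determined by their pure-difference binomial content, reducing the problem to the comparison already carried out.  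Minimality of the decomposition then follows because the primes $I(\chi_i)$ are pairwise distinct (distinct characters yield distinct ideals) and share codimension $\rank(L)$, so no component can be removed.
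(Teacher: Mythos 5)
The paper offers no proof of this statement---it is quoted verbatim from \cite[Corollary~2.2]{ES} with the proof deferred to that reference---so I am judging your argument on its own terms. Most of it is sound: the divisibility/injectivity argument for the count of extensions $\rho_1,\dots,\rho_g$, the uniqueness of $\chi_i$ via $\Hom(\Sat(L)/\Sat'_p(L),\kk^*)=0$ in characteristic $p$, the Frobenius-power computation showing $\sqrt{I(\rho_i)}=I(\chi_i)$, and the character-separation argument at the level of individual binomials are all correct.

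The genuine gap is the step where you promote the binomial-level comparison to the ideal identity $\bigcap_i I(\rho_i)\subseteq I(\rho)$ by ``appealing to the fact that intersections of binomial ideals are binomial.'' That fact is false, and conspicuously so in the context of this paper: Example~\ref{e:nonbinomialhull} exhibits a binomial ideal whose hull---an intersection of binomial primary components---is not binomial, and Example~\ref{e:nonbinomialItoral} does the same for an intersection of toral components. In the present situation the intersection $\bigcap_i I(\rho_i)$ \emph{is} binomial, but that is essentially the content of the theorem and cannot be assumed. To close the gap you need an actual argument, e.g.: each $I(\rho_i)$ is homogeneous for the $\ZZ^n/\Sat'_p(L)$-grading, so it suffices to check the inclusion on each graded piece; there an element $\sum_u c_u x^u$ lies in $I(\rho_i)$ iff $\sum_u c_u\rho_i(u-u_0)=0$ for a fixed base exponent $u_0$, and writing $\rho_i=\rho_1\psi_i$ with $\psi_i$ running over the character group of $\Sat'_p(L)/L$, invertibility of the character table converts the $g$ conditions into one condition per $L$-coset, whose solution space is visibly spanned by the binomials $x^u-\rho(u-v)x^v\in I(\rho)$. (Equivalently, pass to $\kk[\ZZ^n]/I(\rho)\kk[\ZZ^n]\cong\kk[\ZZ^n/L]$ and use the idempotent decomposition of the group algebra of the prime-to-$p$ part, as in \cite{ES}.) A secondary, lesser issue: your primariness argument for $I(\rho_i)$ (``the grading forces every zero divisor to lie in $I(\chi_i)$'') is asserted rather than proved; the clean route is to invert the variables---legitimate because they are nonzerodivisors modulo a lattice ideal, per \cite[Corollary~2.5]{ES} quoted in Section~\ref{s:background}---and observe that $\kk[\ZZ^n]/I(\rho_i)\kk[\ZZ^n]$ is a Laurent polynomial ring over the local Artinian group algebra of the finite $p$-group $\Sat(L)/\Sat'_p(L)$, in which every zerodivisor is nilpotent.
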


We finally state the main result of~\cite{ES}.

\begin{theorem}{\cite[Theorem~7.1]{ES}}
\label{thm:binomialPrimDec}
Let $\kk$ be an algebraically closed field. Every binomial ideal $I$ in
$\kk[\NN^n]$ has a minimal primary decomposition in terms of binomial
ideals; in other words, the associated primes of $I$ are binomial, and its
primary components can be chosen binomial.
\end{theorem}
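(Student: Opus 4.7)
The plan is to prove the theorem in two stages: first reduce to the cellular case via Theorem~\ref{thm:binomialCellularDec}, then analyze each cellular piece using the lattice ideal decomposition (Theorem~\ref{thm:latticePrimaryDecomposition}) together with the binomiality of hulls for cellular binomial ideals.

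First I would apply Theorem~\ref{thm:binomialCellularDec} to write $I = \bigcap_j I_j$ as a finite intersection of cellular binomial ideals. Since the associated primes of $I$ are contained in the union of the associated primes of the $I_j$, and binomial primary decompositions of the $I_j$ can be combined to yield one for $I$, it suffices to treat a single $\sigma$-cellular binomial ideal $I$. For such an $I$, the contraction $I \cap \kk[\NN^\sigma]$ is a lattice ideal $I(\rho)$ on some lattice $L \subseteq \ZZ^\sigma$: cellularity forces every binomial in the contraction to be pure, with no monomial terms. Moreover, since the $x_j$ for $j \in \sigma^c$ are nilpotent modulo $I$, every associated prime of $I$ contains $\mm_{\sigma^c}$.

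Next, I would apply Theorem~\ref{thm:latticePrimaryDecomposition} to $I(\rho)$, producing the minimal primary decomposition $I(\rho) = \bigcap_{i=1}^g I(\rho_i)$ with minimal primes $I(\chi_i)$. Lifting to $\kk[\NN^n]$, the minimal primes of $I$ are precisely the extensions $\mm_{\sigma^c} + I(\chi_i)$. Embedded associated primes (if any) take the form $\mm_{\sigma^c} + I(\chi)$ for partial characters $\chi$ on sublattices of $\Sat(L)$ containing $L$; these arise from ``witness'' monomials in the variables indexed by $\sigma^c$ and are detected by iteratively examining colon ideals $(I : m)$ for monomials $m$ supported on $\sigma^c$, contracting each to $\kk[\NN^\sigma]$, and re-applying Theorem~\ref{thm:latticePrimaryDecomposition} to the resulting lattice ideals.

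For each associated prime $P = \mm_{\sigma^c} + I(\chi)$ so obtained, I would construct a binomial $P$-primary component by forming the cellular binomial ideal
\[
J_{P,N} \defeq I + I(\chi)\kk[\NN^n] + \mm_{\sigma^c}^N
\]
for $N$ sufficiently large (larger than the nilpotency indices of all $x_j$ for $j \in \sigma^c$ modulo $I$ and any relevant witness exponents), and then taking $\Hull(J_{P,N})$. By \cite[Corollary~6.5]{ES}, the hull of a cellular binomial ideal is binomial, so each such hull is binomial; adding $I(\chi)$ and $\mm_{\sigma^c}^N$ ensures that only primes above $P$ remain, and passing to the hull strips off any components at embedded primes strictly above $P$, producing a $P$-primary binomial ideal. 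Intersecting these components across all associated primes then recovers $I$. The hard part is verifying the correctness of this construction: one must simultaneously (i) choose $N$ large enough that no spurious embedded behavior is introduced, (ii) show that $\Hull(J_{P,N})$ agrees with the $P$-primary component of $I$ in a genuine minimal primary decomposition, and (iii) confirm that the intersection over all $P$ equals $I$. All three require careful control over how characters extend across $\Sat(L)/L$ and over the interplay between $\mm_{\sigma^c}^N$ and the lattice structure. Cellularity is essential throughout: without it, one cannot invoke \cite[Corollary~6.5]{ES}, and the whole strategy breaks, as Example~\ref{e:nonbinomialhull} makes clear.
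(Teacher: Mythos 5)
First, a point of reference: the paper does not prove this statement at all --- it is quoted verbatim from \cite{ES} as Theorem~7.1 there, so there is no in-paper argument to compare against. Your sketch does follow the architecture of the Eisenbud--Sturmfels proof (cellular decomposition via Theorem~\ref{thm:binomialCellularDec}, primary decomposition of the contracted lattice ideal via Theorem~\ref{thm:latticePrimaryDecomposition}, associated primes detected through colon ideals with monomials in the nilpotent variables, and primary components realized as hulls of cellular binomial ideals), and that skeleton is correct. But two steps you pass over quickly are precisely where the real work lies.

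The first gap is the reduction: ``binomial primary decompositions of the $I_j$ can be combined to yield one for $I$.'' A single associated prime $P$ of $I$ --- including a minimal one, whose primary component is \emph{unique} --- can be associated to several of the cellular pieces $I_j$, and then the $P$-primary component of $I$ is forced to be an intersection of several binomial $P$-primary ideals. Intersections of binomial ideals are not binomial in general; the paper's Example~\ref{e:nonbinomialhull} exists exactly to warn against this kind of optimism. So minimality of the combined decomposition, with binomial components, requires an argument you have not supplied. The second gap is the one you flag yourself as ``the hard part,'' namely that $\bigcap_P \Hull(J_{P,N}) = I$; this is essentially the entire content of the theorem, and in \cite{ES} it is not handled by a uniform ``take $N$ large'' device. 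There the construction in positive characteristic uses Frobenius powers $P^{[p^e]}$ of the whole prime (not ordinary powers of $\mm_{\sigma^c}$ together with $I(\chi)$), and characteristic zero requires a genuinely separate argument --- the paper's Remark~\ref{remark:BaseFieldLatticeIdeals} already signals that the characteristic enters in an essential way at the lattice-ideal stage. A smaller repair: before invoking \cite[Corollary~6.5]{ES} you must saturate, i.e.\ work with $\bigl((I + I(\chi) + \mm_{\sigma^c}^N) : (\prod_{i \in \sigma} x_i)^\infty\bigr)$, since $J_{P,N}$ itself need not be cellular (the $\sigma$-variables may be zerodivisors modulo it); compare the shape of the component in~\eqref{eqn:primDec}, which also carries an extra combinatorially computed monomial ideal $M$ that your formula omits. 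In short: right scaffolding, but the load-bearing steps are the ones left unverified.
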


\begin{remark}
\label{remark:BaseFieldLatticeIdeals}
The assumption in Theorem~\ref{thm:binomialPrimDec} that $\kk$ is algebraically closed is necessary.
This can be seen at
the level of lattice ideals
(Theorem~\ref{thm:latticePrimaryDecomposition}), even in one variable
(consider the ideal $\<y^p-1\> \subseteq \kk[y]$). 
Also, it is clear from the aforementioned examples
 that the characteristic of the
base field makes a difference 
in the primary decomposition of binomial ideals.
However, it is only when decomposing lattice ideals that base field
considerations enter. Before this stage, for instance, when performing
cellular decompositions, the base field does not play a role.
\endrk
\end{remark}

\section{Mesoprimary decomposition of positively graded binomial ideals}\label{s:mesodecomp}

In this section we (re)define terms from \cite{kmmeso} 
in the language of commutative algebra rather than monoid
congruences, and provide several examples.  

\newpage

\begin{definition}
\label{d:mesoprime-mesoprimary} 
\leavevmode
\begin{enumerate}[\label=(a)]
\item
\label{d:mesoprime}
A \emph{mesoprime ideal} is an ideal of the form
$(\kk[\NN^n])I_\text{lat} + \mm_{\sigma^c}$, where
$\sigma \subset [n]$, and $I_\text{lat} \subset \kk[\NN^\sigma]$ is a
lattice ideal.
\item
\label{d:mesoprimary}
An ideal $I$ is \emph{mesoprimary} if it is $\sigma$-cellular and 
$(I : x^m) \cap \kk[\NN^\sigma] = I \cap \kk[\NN^\sigma]$ for every
monomial $x^m \notin I$. In this case, the \emph{associated mesoprime}
of $I$ is $\kk[\NN^n] (I \cap \kk[\NN^\sigma]) + \mm_{\sigma^c}$.
\end{enumerate}
\end{definition}

\begin{remark}\label{r:mesoprime-mesoprimary}
Definition~\ref{d:mesoprime-mesoprimary}.\ref{d:mesoprime} 
is immediately equivalent to \cite[Definition~10.4.4]{kmmeso}, 
as is Definition~\ref{d:mesoprime-mesoprimary}.\ref{d:mesoprimary} 
to~\cite[Definition~10.4.2]{kmmeso}.  Indeed, $\sigma$-cellular 
ideals induce primary congruences, and the second condition 
in Definition~\ref{d:mesoprime-mesoprimary}.\ref{d:mesoprimary} 
ensures that $I$ is maximal among binomial ideals inducing the same 
congruence \cite[Remark~10.5, Corollary~6.7]{kmmeso}.  

Definition~\ref{d:mesoprime-mesoprimary}.\ref{d:mesoprimary} can be
equivalently stated in a way reminiscent of the definition of primary
ideals. Indeed, a binomial ideal $I$ is mesoprimary
to $I_\textnormal{meso} = I_\textnormal{lat} + \<x_i \mid
i \notin \sigma\>$ for some lattice ideal $I_\textnormal{lat}$ if~and
only if $I$ is $\sigma$-cellular and whenever $mb \in I$, where $m$ is a
monomial in the $\sigma^c$-variables and~$b$ is a binomial in the
$\sigma$-variables, then either $m \in I$ or $b \in I_\textnormal{lat}$. 
\endrk
\end{remark}

\begin{remark}
While we have expressed the definition of a mesoprimary ideal 
in algebraic terms, this is a combinatorial condition.
For instance, note that while
$I=\<x^3-1,y(x-1),y^3\> \subset \kk[x,y]$ is $\{x\}$-cellular and
$\Ass(I) = \Ass(\<x^3-1,y\>)$ (the latter ideal being a mesoprime),
$I$ itself is not mesoprimary, as $(I:y) \cap \kk[x] = \<x-1\> \neq
\<x^3-1\> = I \cap \kk[x]$.

We also remark that an ideal of the form $\kk[\NN^n]
I_{\textnormal{lat}} + \kk[\NN^n] I_{\textnormal{art}}$, 
where $I_{\textnormal{lat}}$ is a lattice ideal in 
$\kk[\NN^\sigma]$ and $I_{\textnormal{art}}$ is an
artinian binomial ideal in $\kk[\NN^{\sigma^c}]$ is always
mesoprimary, but not all mesoprimary ideals are of this form.  
For example, consider $I = \<x^2y^2-1, xz-yw, z^2, zw, w^2\> \subset
\kk[x,y,z,w]$, which is $\{x,y\}$-cellular and the intersection 
$I \cap \kk[x,y]$ is the lattice ideal generated by $x^2y^2-1$. 
If $I$ were of the form $I_{\textnormal{lat}}+I_{\textnormal{art}}$, 
then neither the lattice part nor the artinian part could account 
for the binomial $xz-yw$.
On the other hand, this ideal is mesoprimary, as can be checked 
by computing the ideal quotients with $z$, $w$, and $zw$, 
whose intersection with $\kk[x,y]$ equals $\<x^2y^2-1\>$.
\endrk
\end{remark}

Mesoprimary ideals are easy to primarily decompose, as doing so requires simply computing a primary decomposition for the underlying lattice ideal.  

\begin{proposition}[{\cite[Corollary~15.2 and~Proposition~15.4]{kmmeso}}]
\label{p:mesoprimaryPrimDec}
Let $I$ be a ($\sigma$-cellular) mesoprimary ideal, and denote by
$I_{\textnormal{lat}}$ the lattice ideal $I \cap
\kk[\NN^\sigma]$. 
The associated primes of $I$ are exactly the (minimal) primes of its
associated mesoprime $\kk[\NN^\sigma] I_{\textnormal{lat}}+
\mm_{\sigma^c}$.
Assume that $\kk$ is algebraically closed, and let
$I_{\textnormal{lat}} = \cap_{j=1}^g I_j$ be the primary decomposition
of $I_{\textnormal{lat}}$ from
Theorem~\ref{thm:latticePrimaryDecomposition}.
Then
\[
I = \cap_{j=1}^g (I+I_j)
\]
is the (canonical) primary decomposition of $I$.
\end{proposition}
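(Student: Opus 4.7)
The plan is to establish the three assertions of the proposition in sequence: the identification of associated primes; the primariness of each $I + I_j$; and the intersection formula, from which minimality is automatic. Throughout, the strategy is to reduce statements about $\kk[\NN^n]/I$ to ones over $\kk[\NN^\sigma]/I_\textnormal{lat}$, where the primary decomposition of the lattice ideal $I_\textnormal{lat}$ is already provided by Theorem~\ref{thm:latticePrimaryDecomposition}. The recurring device is a \emph{top monomial} $x^N \notin I$ satisfying $x^{N+e_i} \in I$ for every $i \in \sigma^c$, which exists by $\sigma$-cellularity.

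For the associated primes: $\sigma$-cellularity gives $\mm_{\sigma^c} \subseteq \sqrt{I}$, so every associated prime of $\kk[\NN^n]/I$ contains $\mm_{\sigma^c}$. To exhibit each $I(\chi_j) + \mm_{\sigma^c}$ as associated, pick $s_j \in \kk[\NN^\sigma]$ whose class in $\kk[\NN^\sigma]/I_\textnormal{lat}$ has annihilator $I(\chi_j)$; then the mesoprimary identity $(I : x^N) \cap \kk[\NN^\sigma] = I_\textnormal{lat}$, combined with $\mm_{\sigma^c} \cdot x^N \subseteq I$, yields $\ann(\overline{s_j x^N}) = I(\chi_j) + \mm_{\sigma^c}$. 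Conversely, a filtration of $\kk[\NN^n]/I$ by $x_{\sigma^c}$-degree whose successive quotients embed into copies of $\kk[\NN^\sigma]/I_\textnormal{lat}$ (again by the mesoprimary identity, now applied to each standard monomial $x^m \notin I$) shows $\Ass_{\kk[\NN^\sigma]}(\kk[\NN^n]/I) \subseteq \Ass_{\kk[\NN^\sigma]}(\kk[\NN^\sigma]/I_\textnormal{lat}) = \{I(\chi_j) : j=1,\dots,g\}$.

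For primariness of $I + I_j$: this ideal remains $\sigma$-cellular because lattice ideals contain no monomials, and one checks that $(I + I_j) \cap \kk[\NN^\sigma] = I_\textnormal{lat} + I_j = I_j$. The same top-monomial argument, run with $I_j$ replacing $I_\textnormal{lat}$, shows $I + I_j$ is mesoprimary with associated mesoprime $I(\chi_j) + \mm_{\sigma^c}$. Since $I(\chi_j)$ is prime by Theorem~\ref{thm:latticePrimaryDecomposition}, this mesoprime is itself prime, and hence $I + I_j$ has a unique associated prime and is primary.

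The intersection formula $I = \cap_j (I + I_j)$ is the main step. The inclusion $\subseteq$ is immediate; for the reverse, given $f \in \cap_j (I + I_j)$, write $f = g_j + h_j$ with $g_j \in I$ and $h_j$ in the extension of $I_j$ to $\kk[\NN^n]$. Expanding $f = \sum_\mu f_\mu x^\mu$ in $x_{\sigma^c}$-monomials forces $h_{j,\mu} \in I_j$ for every $\mu$, while the top-monomial trick applied to $g_j \cdot x^N \in I$ gives $g_{j,0} \in I_\textnormal{lat}$; hence $f_0 \in I_j$ for every $j$, so $f_0 \in \cap_j I_j = I_\textnormal{lat} \subseteq I$. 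Replacing $f$ by $f - f_0$ and iterating over the standard monomials (using top monomials adapted to each stage of the induction) yields $f \in I$. Minimality follows because the primes $I(\chi_j) + \mm_{\sigma^c}$ are pairwise distinct and equicodimensional, so no component is redundant. The principal obstacle is making the coefficient-by-coefficient iteration rigorous, since the top-monomial device must be chosen anew at each stage; this is the algebraic shadow of the fact that $\kk[\NN^n]/I$ is flat over $\kk[\NN^\sigma]/I_\textnormal{lat}$, which in \cite{kmmeso} (Proposition~15.4) is handled combinatorially through monoid congruences.
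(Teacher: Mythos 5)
First, note that the paper does not prove this proposition at all: it is quoted verbatim from \cite[Corollary~15.2 and Proposition~15.4]{kmmeso}, so your attempt can only be measured against the argument there, which is carried out through the combinatorics of mesoprimary congruences. Within your own write-up, the forward half of the associated-primes claim is genuinely correct and complete: the existence of a maximal standard monomial $x^N\in\kk[\NN^{\sigma^c}]$ follows from nilpotence of the $\sigma^c$-variables, and your computation $\ann(\overline{s_jx^N})=I(\chi_j)+\mm_{\sigma^c}$ using $(I:x^N)\cap\kk[\NN^\sigma]=I_{\textnormal{lat}}$ is airtight. The problems are in the two remaining steps, and they are not cosmetic.

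For the reverse containment $\Ass(\kk[\NN^n]/I)\subseteq\{I(\chi_j)+\mm_{\sigma^c}\}$, you assert that the successive quotients of the $x_{\sigma^c}$-degree filtration \emph{embed} into direct sums of copies of $\kk[\NN^\sigma]/I_{\textnormal{lat}}$ ``by the mesoprimary identity.'' But the mesoprimary identity only controls the $\kk[\NN^\sigma]$-annihilator of each individual standard monomial, i.e.\ it makes each cyclic submodule $\kk[\NN^\sigma]\cdot\overline{x^m}$ isomorphic to $\kk[\NN^\sigma]/I_{\textnormal{lat}}$. A sum of cyclic submodules each with the correct annihilator can still acquire new associated primes: for $R=\kk[x]$ the module $R^2/\langle(x,-x)\rangle$ is a sum of two submodules each isomorphic to $R$, yet $(x)$ is associated to it, via the class of $(1,-1)$. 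In the mesoprimary setting the analogous danger is an element such as $\overline{x^{m_1}}-\lambda\overline{x^{m_2}}$ whose annihilator jumps; ruling this out is exactly where \cite{kmmeso} invokes the structure of the congruence classes, and your sketch supplies nothing in its place. The same issue infects your claim $(I+I_j)\cap\kk[\NN^\sigma]=I_j$ and, most seriously, the intersection formula: your coefficient-by-coefficient extraction of $f_0$ and the iteration over standard monomials tacitly project onto $\NN^{\sigma^c}$-graded components, but neither $I$ nor $I+\kk[\NN^n]I_j$ is homogeneous for that grading (a binomial in $I$ may mix $\sigma$- and $\sigma^c$-variables across its two terms), so membership in $I$ is not preserved by these projections. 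You flag this yourself as ``the principal obstacle'' and defer to flatness and to \cite{kmmeso}; but that obstacle \emph{is} the theorem, so as written the argument establishes only the easy inclusion $I\subseteq\cap_j(I+I_j)$ and the identification of some, not all, associated primes.
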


Arguably the most important objects in~\cite{kmmeso} are the witnesses
(Definition~\ref{d:witness}), which are used as a starting place for
constructing the mesoprimary components in
Theorem~\ref{t:kmessential}. 

\cite[Definition~12.1]{kmmeso}, which introduces witnesses,
is complicated due to the 
need to account for the many pathologies that binomial ideals may
present. In this article we evade some of these pathologies
(and significantly simplify the definition of witnesses as a consequence) by
assuming that our binomial ideals are graded with respect 
to a positive grading (Definition~\ref{def:Agraded}).

\begin{definition}
\label{def:Agraded}
Let $A$ be a $d\times n$ integer matrix of full rank $d$, whose
columns span $\ZZ^d$ as a lattice. The matrix $A$ induces a
$\ZZ^d$-grading on $\kk[\NN^n]$, called the
\emph{$A$-grading}, by setting the degree of $x_i$ to be the $i$th
column of $A$. 
If the columns of $A$ belong to an open half-space defined by a
hyperplane through the origin, then the $A$-grading is \emph{positive}.
This condition implies that the cone consisting of the nonnegative real
combinations of the columns of $A$ is \emph{pointed} or 
\emph{strongly convex} (meaning that it contains no lines)
and no $x_i$ has degree zero.
\end{definition}

\begin{conv}
From now on, any $A$-grading on a binomial ideal is assumed to be positive.
\end{conv}

The standard $\ZZ$-grading is a positive $A$-grading, where $A$ is the
$1\times n$ matrix all of whose entries are ones. 
Note that when $\kk[\NN^n]$ is positively graded, the only monomial of
degree $0$ is $x^0=1$.
Additionally, an
$A$-grading is positive if and only if the columns of $A$ span $\ZZ^d$
as a lattice and there exists a $1\times d$
matrix $h$ such that all of the entries of $hA$ are strictly
positive. 
This implies that there cannot be divisibility relations
among monomials of the same degree. Indeed, if $u,v \in \NN^n$, $u
\neq v$ and $x^u$ divides $x^v$, then $hAu < hAv$, which implies that
$Au\neq Av$.

\begin{definition}
\label{d:witness}
Fix $\sigma \subset [n]$ and an
$A$-homogeneous binomial ideal $I \subset \kk[\NN^n]$, and set
$$I_\sigma=\bigg(I:\bigg(\prod_{i\in \sigma}x_i\bigg)^\infty\bigg).$$
\begin{enumerate}[(a)]
\item 
A \emph{monomial $I$-witness for
$\mm_{\sigma^c}$} is a monomial $x^w \in \kk[\NN^{\sigma^c}]$, $x^w
\notin I_\sigma$, such that there exists
$x^m \in \kk[\NN^\sigma]$ with the property that for each $i \notin
\sigma$, there are
a monomial $x^{q_i}$ and a scalar $\lambda_i \in \kk^*$ such
that $x_i (x^m x^w - \lambda_i x^{q_i}) \in I_\sigma$ but $x^mx^w
-\lambda_i x^{q_i} \notin I_\sigma$.  
\item 
A monomial $I$-witness $x^w$ for $\mm_{\sigma^c}$ is
\emph{essential} if there exist
an $A$-homogeneous
polynomial $p \in \kk[\NN^n]$,
$p\notin I_\sigma$, and a
monomial $x^v \in \kk[\NN^\sigma]$ such that $x^vx^w$ is a monomial in
$p$, and $x_j p \in I_\sigma$ for all $j \notin \sigma$.
\end{enumerate}
\end{definition}

Note that it is acceptable to take $\sigma = [n]$ in the above
definition, so that $\mm_{\sigma^c} = \< 0 \>$. We see that $x^w = 1$ 
is an essential monomial $I$-witness for $\<0\>$,
as most requirements of the definition do not apply in this case.

\begin{proposition}
\label{p:witness}
Fix $\sigma \subset [n]$,
an $A$-homogeneous binomial ideal $I \subset \kk[\NN^n]$, and
$x^w\in \kk[\NN^{\sigma^c}]$ with 
$x^w \notin I$.  Then $x^w$ is an (essential) monomial $I$-witness for
$\mm_{\sigma^c}$ in the sense of Definition~\ref{d:witness} if
and only if $I$ is an (essential) monomial $I$-witness for
$\mm_{\sigma^c}$ in the sense of \cite[Definition~12.1]{kmmeso}.   
\end{proposition}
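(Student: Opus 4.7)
The plan is to unwind \cite[Definition~12.1]{kmmeso} using the standard dictionary between binomial ideals and monoid congruences, and then to observe that positivity of the $A$-grading collapses the extra combinatorial bookkeeping in that definition to exactly the conditions of Definition~\ref{d:witness}.

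First, I would recall the background setup from \cite{kmmeso}. A binomial ideal $I \subset \kk[\NN^n]$ determines a congruence $\sim_I$ on $\NN^n$ via $u \sim_I v$ iff $x^u - \lambda x^v \in I$ for some $\lambda \in \kk^*$, together with a distinguished \emph{nil} class consisting of those $u$ with $x^u \in I$. Localizing by inverting the $\sigma$-variables produces the congruence attached to $I_\sigma$, so that $x^w \notin I_\sigma$ records precisely that $w$ is not in the nil class of $\sim_{I_\sigma}$. Positivity of the $A$-grading is used in two places: every non-nil equivalence class of $\sim_I$ lies in a single $A$-degree, and consequently every $A$-homogeneous element of $\kk[\NN^n]/I_\sigma$ reduces to a scalar multiple of a single non-nil monomial class. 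The second statement uses that distinct non-nil classes of the same $A$-degree cannot be $\kk$-linearly related modulo $I_\sigma$, which in turn follows from the observation made just before Definition~\ref{d:witness} that there are no divisibility relations among monomials of the same $A$-degree.

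For part (a), the condition of \cite[Definition~12.1]{kmmeso} asks that $w$ not be nil in $\sim_{I_\sigma}$ and that there exist $m \in \NN^\sigma$ such that, for each $i \in \sigma^c$, translation by $e_i$ sends the class of $m+w$ into a class identified with some $q_i$ after further quotienting by $\mm_{\sigma^c}$, while in $\sim_{I_\sigma}$ itself the classes of $m+w$ and $q_i$ remain distinct. Re-encoding each congruence statement in terms of ideal membership produces exactly the data $x^m, x^{q_i}, \lambda_i$ of Definition~\ref{d:witness}(a): the identification after multiplication by $x_i$ is $x_i(x^m x^w - \lambda_i x^{q_i}) \in I_\sigma$, and the non-identification before multiplication is $x^m x^w - \lambda_i x^{q_i} \notin I_\sigma$. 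Both implications then follow by running this dictionary in either direction.

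For part (b), the essential witness condition in \cite{kmmeso} requires a \emph{cogenerator} in the congruence: a non-nil class annihilated by every $\sigma^c$-generator and containing a monomial whose $\sigma^c$-coordinate is $w$. Algebraically, the existence of such a class is equivalent to the existence of a non-nil $A$-homogeneous $p$ with $x_j p \in I_\sigma$ for every $j \in \sigma^c$, by the second consequence of positivity noted above: such a $p$ reduces modulo $I_\sigma$ to a scalar multiple of a single non-nil class, and any monomial representative of that class may be chosen of the form $x^v x^w$ by hypothesis. The main obstacle is this collapse step, since in the general setting of \cite{kmmeso} the cogenerator is only specified up to $\kk$-linear combination; some care is needed to verify that in the presence of the positive grading no genuine combination of distinct non-nil classes is lost, and that the resulting $p$ and $x^v x^w$ really do record every requirement of the original definition. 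Once that verification is in place, both directions of the equivalence are immediate from the same dictionary used in part (a).
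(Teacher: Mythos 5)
Your overall strategy --- translate \cite[Definition~12.1]{kmmeso} through the congruence dictionary and let positivity of the $A$-grading absorb the leftover combinatorial conditions --- is the right one, and is essentially the paper's. But your execution misses the one point that carries the actual content of the proposition. For plain (non-essential) witnesses the two definitions are \emph{not} an exact match under the dictionary: \cite[Definition~12.1]{kmmeso} additionally requires that the witness not be \emph{exclusively maximal} in the sense of \cite[Definition~4.7]{kmmeso}, i.e.\ that for each $i \notin \sigma$ some aide $x^{q_i}$ can be chosen which is not a proper multiple of the witness monomial. Your part (a) asserts that re-encoding the congruence statements ``produces exactly the data'' and that both implications ``follow by running this dictionary in either direction''; this silently drops the exclusive-maximality clause, and the direction from Definition~\ref{d:witness} to the definition of \cite{kmmeso} is precisely where it must be verified. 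The paper's proof does this by noting that $I_\sigma$ is $A$-homogeneous, so every $x^{q_i}$ with $x_i(x^m x^w - \lambda_i x^{q_i}) \in I_\sigma$ and $x^m x^w \notin I_\sigma$ has the same $A$-degree as $x^m x^w$, and positivity forbids divisibility relations between distinct monomials of equal degree. The same mechanism handles the essential case, where the only discrepancy is that \cite{kmmeso} requires $x^v x^w$ not to be divisible by the other monomials of $p$; $A$-homogeneity of $p$ settles this at once.

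Separately, the auxiliary claim in your setup that every $A$-homogeneous element of $\kk[\NN^n]/I_\sigma$ ``reduces to a scalar multiple of a single non-nil monomial class'' is false (take $I = 0$ and $p = x_1 + x_2$ in the standard grading), and it contradicts the very fact you cite in its support, namely that distinct non-nil classes of the same degree are not $\kk$-linearly related modulo $I_\sigma$. Your part (b) leans on this ``collapse step'' and flags it as the main obstacle, but it is not the obstacle: the essential-witness condition in both definitions only asks that $x^v x^w$ occur in the support of some $p \notin I_\sigma$ with $x_j p \in I_\sigma$ for all $j \notin \sigma$; no reduction to a single class is needed or available. Replace that step by the divisibility observation above, and both parts close.
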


\begin{proof}
Upon examining the prerequisite definitions for monomial $I$-witnesses
\cite[Definition~12.1]{kmmeso}, the only difference between that
statement and ours is that any monomial $I$-witness $x^w$ cannot be 
exclusively maximal \cite[Definition~4.7]{kmmeso}.  
In particular, we must show that for each $i \notin \sigma$, 
some choice of $x^{q_i}$ in Definition~\ref{d:witness} 
does not divide $x^w$.  

If $\sigma=[n]$, there is nothing to check, so assume that $\sigma
\subsetneq [n]$. Recall that 
$I_\sigma = \big( I: (\prod_{i\in\sigma}x_i)^\infty \big)$.
Since $I$ is $A$-homogeneous, so is $I_\sigma$. As $x^w \notin
I_\sigma$, and therefore, $x^m x^w \notin I_\sigma$, this implies that
each monomial $x^{q_i}$ satisfying $x_i(x^mx^w - \lambda_i
x^{q_i}) \in I_\sigma$ must have the same $A$-degree as $x^mx^w$. Given
that the $A$-grading is positive, we conclude that
the set $\{x^m x^w,x^{q_i}\}$ has no divisibility relations, so $x^w$ is
not exclusively maximal. 

Lastly, upon comparing the definition of esssential monomial $I$-witness
to \cite[Definition~12.1]{kmmeso}, the only difference is that
the latter requires the monomial $x^vx^w$ not be divisible by any other terms of $p$.  
This follows immediately from the fact that $p$ is $A$-homogeneous, 
as this implies that there are no divisibility relations among its nonzero monomials.  
\end{proof}

\begin{definition}
\label{d:mesoprimarycomponents}
Fix $\sigma \subset [n]$, a binomial ideal $I \subset \kk[\NN^n]$, and a monomial $x^m \notin I$.  Let 
$$I_{m}^\sigma \defeq ((I : (\textstyle\prod_{i\in \sigma} x_i)^\infty):x^m) \cap \kk[\NN^\sigma].$$
\begin{enumerate}[(a)]
\item The \emph{mesoprime at $x^m$} is the mesoprime ideal
$I_m^\sigma + \<x_i \mid i \notin \sigma\>$.
\item
A mesoprime is \emph{associated to $I$} if it equals the
mesoprime at an essential monomial $I$-witness.  
\item
The \emph{coprincipal component cogenerated by $x^m$} is the binomial ideal 
$$W_{x^m}^\sigma(I) = ((I + I_m^\sigma) : (\textstyle\prod_{i \in \sigma}x_i)^\infty) + M_{x^m}^\sigma(I),$$
where $M_{x^m}^\sigma(I) \subset \kk[\NN^n]$ is the ideal generated by
monomials $x^u \in \kk[\NN^n]$ such that 
$$x^m \notin ((I+\<x^u\>):(\textstyle\prod_{i\in \sigma} x_i)^\infty).$$
In general, we say that a monomial ideal $I$ is \emph{cogenerated by a set
  of monomials $M$} if the set of monomials not in $I$ consists of all
the monomials divisible by at least one element of $M$.  
\end{enumerate}
\end{definition}

Note that as a direct consequence of \cite[Proposition~12.17]{kmmeso}, coprincipal components cogenerated by essential witnesses are mesoprimary.  

\begin{remark}
\label{r:mesoprimarycomponents}
The equivalence of Definition~\ref{d:mesoprimarycomponents} to those
in \cite[Section~12]{kmmeso} follows upon unraveling prerequisite
definitions.  In particular, resuming notation from
Definition~\ref{d:witness}, the ideal $M_{x^m}^\sigma(I)$ contains the
same monomials as the ideal in \cite[Definition~12.13]{kmmeso} since 
$$x^m \notin \<x^u\> \subset \kk[\NN^n][x_i^{-1} \mid i \in \sigma]/I$$
precisely when $x^m$ is nonzero in $\kk[\NN^n][x_i^{-1} \mid i \in \sigma]/(I+\<x^u\>)$.  
\endrk
\end{remark}

\begin{theorem}
[{\cite[Theorem~13.3]{kmmeso}}]\label{t:kmessential}
Every binomial ideal $I \subset \kk[\NN^n]$ is the intersection of the
coprincipal components cogenerated by its essential witnesses.   
\end{theorem}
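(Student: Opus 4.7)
The plan is to prove both inclusions in $I = \bigcap_{(\sigma, x^w)} W_{x^w}^\sigma(I)$, where the intersection is indexed by all pairs $(\sigma, x^w)$ for which $x^w$ is an essential monomial $I$-witness for $\mm_{\sigma^c}$. The containment $I \subseteq W_{x^w}^\sigma(I)$ for each such pair is immediate from Definition~\ref{d:mesoprimarycomponents}: one has $I \subseteq I + I_m^\sigma$, taking the $(\prod_{i \in \sigma} x_i)$-saturation only enlarges this ideal, and appending the monomial ideal $M_{x^w}^\sigma(I)$ then yields $W_{x^w}^\sigma(I)$. So all the substance lies in the reverse inclusion.

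For the reverse inclusion I would argue by contrapositive: fix $f \in \kk[\NN^n] \minus I$ and produce a pair $(\sigma, x^w)$ with $x^w$ an essential monomial $I$-witness for $\mm_{\sigma^c}$ and $f \notin W_{x^w}^\sigma(I)$. The positive $A$-grading simplifies this task in two ways. First, each $A$-graded piece of $\kk[\NN^n]/I$ is a finite-dimensional $\kk$-vector space, so I may decompose $f$ into $A$-homogeneous components and reduce to the case where $f$ is itself $A$-homogeneous. Second, via the cellular decomposition of Theorem~\ref{thm:binomialCellularDec}, I may replace $I$ by a cellular component not containing $f$, reducing to the $\sigma$-cellular case for a fixed $\sigma \subset [n]$. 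Working modulo $I_\sigma = (I : (\prod_{i \in \sigma} x_i)^\infty)$, the goal becomes building a nonzero $A$-homogeneous polynomial $p$ (from a $\kk[\NN^\sigma]$-multiple of $f$) such that $x_j p \in I_\sigma$ for every $j \notin \sigma$. Any monomial in the support of $p$ of the form $x^v x^w$ with $x^v \in \kk[\NN^\sigma]$ and $x^w \in \kk[\NN^{\sigma^c}]$ then furnishes an essential witness candidate $x^w$, and a check against the cogeneration condition defining $M_{x^w}^\sigma(I)$ confirms that $f \notin W_{x^w}^\sigma(I)$.

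The main obstacle is producing such a polynomial $p$ and verifying that the resulting $x^w$ is \emph{essential} rather than merely a monomial witness. This amounts to locating an appropriate element in the socle of $\kk[\NN^n]/I_\sigma$ whose support meets $\kk[\NN^{\sigma^c}]$ at the required $A$-degree; positivity of the grading is crucial here both because it makes the relevant graded socle components finite-dimensional and because it eliminates divisibility among $A$-homogeneous monomial terms, exactly as exploited in the proof of Proposition~\ref{p:witness}. Once the equivalences recorded in Proposition~\ref{p:witness} and Remark~\ref{r:mesoprimarycomponents} are in hand, the statement follows from \cite[Theorem~13.3]{kmmeso}, and the argument sketched above is essentially a streamlined reproof in which monoid-congruence combinatorics is replaced by direct algebraic manipulation enabled by positive $A$-homogeneity.
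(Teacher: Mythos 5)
The paper offers no proof of this statement at all: it is imported verbatim from Kahle--Miller, and the only work the paper does is to verify (in Proposition~\ref{p:witness} and Remark~\ref{r:mesoprimarycomponents}) that its algebraic definitions of witness and coprincipal component agree with the combinatorial ones in \cite{kmmeso}. Your closing sentence --- that the result follows from \cite[Theorem~13.3]{kmmeso} once those translations are in hand --- is therefore exactly the intended argument, and that part of your proposal is fine, as is the easy containment $I \subseteq W_{x^w}^\sigma(I)$.

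The sketched independent reproof, however, has two genuine gaps. First, the reduction via Theorem~\ref{thm:binomialCellularDec} does not work as stated: witnesses and coprincipal components are defined relative to $I$ itself (through $I_\sigma$, $I_m^\sigma$ and $M_{x^m}^\sigma(I)$), not relative to a cellular component $C \supseteq I$ with $f \notin C$, so producing an essential $C$-witness $x^w$ with $f \notin W_{x^w}^\sigma(C)$ does not yield an essential $I$-witness nor the inequality $f \notin W_{x^w}^\sigma(I)$; bridging this requires an argument you have not supplied. Second, the step you yourself flag as ``the main obstacle'' is in fact the entire content of the theorem: you must produce, for some $\sigma$, an $A$-homogeneous $p \notin I_\sigma$ annihilated by every $x_j$ with $j \notin \sigma$ (an essentiality certificate) \emph{and} then verify that $f$ avoids the resulting coprincipal component, which means tracking $f$ against the cogeneration condition defining $M_{x^w}^\sigma(I)$. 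Neither the existence of such a socle element in a degree tied to $f$ nor the final non-membership check is carried out; positivity of the grading makes graded pieces finite-dimensional but does not by itself supply either. As written, the sketch asserts the conclusion of Kahle--Miller's combinatorial argument rather than replacing it. The correct course is to drop the sketch and rest the statement on the citation together with Proposition~\ref{p:witness} and Remark~\ref{r:mesoprimarycomponents}, as the paper does.
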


\begin{example}\label{e:mesoprimarycomponents}
Let $I = \<x^2 - y^2, x^2y - xy^2\> \subset \kk[x,y]$.  The ideal $I$
has two distinct mesoprimary components whose associated mesoprime is
the maximal monomial ideal $\<x,y\>$, each of which is cogenerated by
a witness monomial of total degree 2.  The monomial witnesses are $xy$,
and $x^2$, $y^2$. The latter two are considered as a single monomial $I$-witness,
since they are equal modulo $I$. 
The full mesoprimary
decomposition of $I$ produced by Theorem~\ref{t:kmessential} is given
by 
$$I = \<x - y\> \cap \<x^2, y^2\> \cap \<x^2 - y^2, x^3, xy, y^3\>,$$
and Figure~\ref{f:mesoprimarycomponents} depicts the binomial elements of
$I$ and the latter two mesoprimary components.  
\endrk
\end{example}

\begin{figure}
\begin{center}
\includegraphics[width=1.5in]{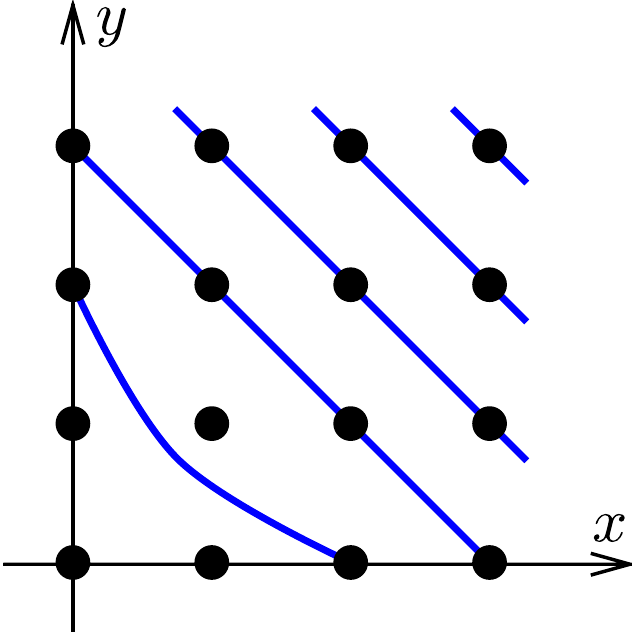}
\hspace{0.3in}
\includegraphics[width=1.5in]{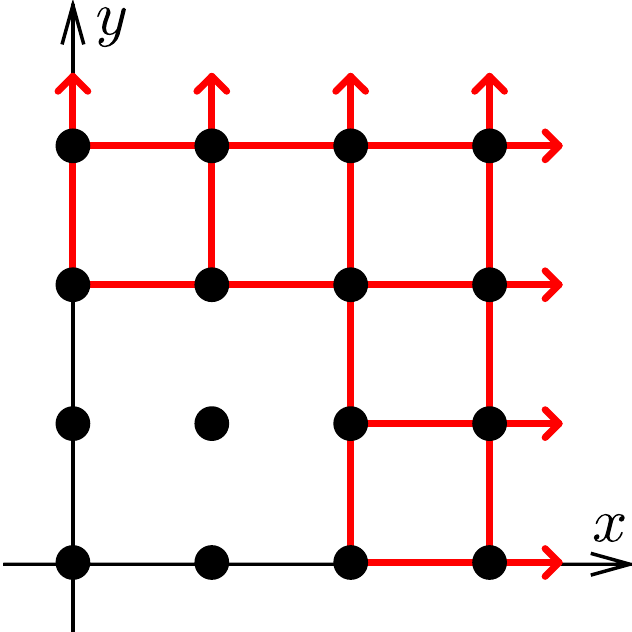}
\hspace{0.3in}
\includegraphics[width=1.5in]{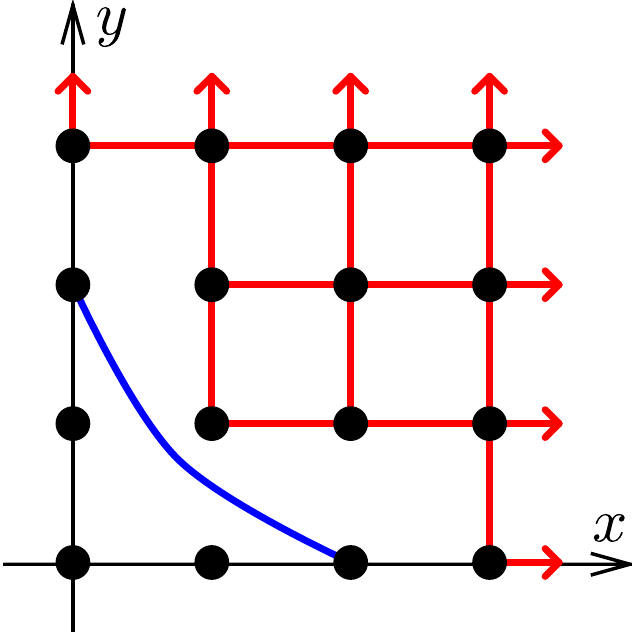}
\end{center}
\caption{Each line segment above represents a binomial element 
of the ideal~$I$ in Example~\ref{e:mesoprimarycomponents} (left) 
and its mesoprimary components whose associated mesoprime is 
the maximal monomial ideal (middle and right).  In particular, 
each connected component represents a monomial equivalence class 
modulo the given ideal.}
\label{f:mesoprimarycomponents}
\end{figure}

\begin{remark}\label{r:computation}
The difficulty in computing the coprincipal components of an ideal $I$
in Theorem~\ref{t:kmessential} is in locating the essential witnesses
of $I$.  Indeed, once a witness $x^m$ is known, computing the
coprincipal component $W_{x^m}^\sigma(I)$ amounts to computing a
saturation and the monomial ideal $M_{x^m}^\sigma(I)$, which is simply
the intersection of the irreducible monomial ideals whose quotients
have a maximal nonzero monomial of the form $x^{m'}$ with $x^{m'} -
\lambda x^m \in I$ for some $\lambda \in \kk$.   
\endrk
\end{remark}

Theorem~\ref{t:kmessential} and Proposition~\ref{p:mesoprimaryPrimDec}
produce a primary decomposition of any binomial ideal, and make 
no assumptions on the field $\kk$.   It is also possible to produce 
an irreducible decomposition using the underlying monoid congruence; 
see~\cite{kmo} for details on this construction.  

\begin{theorem}[{\cite[Theorems~15.6 and 15.11]{kmmeso}}]
Fix a binomial ideal $I \subseteq \kk[Q]$.  Each associated prime
of~$I$ is minimal over some associated mesoprime of~$I$.  If\/~$\kk =
\ol\kk$ is algebraically closed, then refining any mesoprimary
decomposition of~$I$ by canonical primary decomposition of its
components yields a binomial primary decomposition of~$I$.
\end{theorem}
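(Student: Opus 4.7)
The plan is to deduce both assertions directly from Theorem~\ref{t:kmessential} together with Proposition~\ref{p:mesoprimaryPrimDec}, without any substantial independent argument.

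For the first assertion, I would start from the specific decomposition $I = \bigcap_{x^w} W_{x^w}^\sigma(I)$ provided by Theorem~\ref{t:kmessential}, where the intersection ranges over essential monomial $I$-witnesses (with their corresponding cellular sets $\sigma$). The standard injection $\kk[\NN^n]/I \hookrightarrow \bigoplus_{x^w} \kk[\NN^n]/W_{x^w}^\sigma(I)$ induced by the intersection gives
$$\Ass\bigl(\kk[\NN^n]/I\bigr) \;\subseteq\; \bigcup_{x^w} \Ass\bigl(\kk[\NN^n]/W_{x^w}^\sigma(I)\bigr).$$
Each coprincipal component $W_{x^w}^\sigma(I)$ is mesoprimary (by the remark following Definition~\ref{d:mesoprimarycomponents}), with associated mesoprime equal to the mesoprime at $x^w$, which by Definition~\ref{d:mesoprimarycomponents}(b) is an associated mesoprime of $I$. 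Proposition~\ref{p:mesoprimaryPrimDec} then identifies the associated primes of $W_{x^w}^\sigma(I)$ as exactly the minimal primes over this associated mesoprime, yielding the first claim.

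For the second assertion, fix any mesoprimary decomposition $I = \bigcap_j J_j$. Since $\kk$ is algebraically closed, the second half of Proposition~\ref{p:mesoprimaryPrimDec} (which in turn rests on Theorem~\ref{thm:latticePrimaryDecomposition}) produces, for each $j$, a canonical primary decomposition $J_j = \bigcap_k (J_j + I_{j,k})$ in which every $J_j + I_{j,k}$ is binomial, since $J_j$ is binomial and each $I_{j,k}$ is a binomial primary component of the underlying lattice ideal $J_j \cap \kk[\NN^{\sigma_j}]$. Intersecting across $j$ yields $I = \bigcap_{j,k}(J_j + I_{j,k})$, a binomial primary decomposition of $I$ (possibly with redundant components, which may be discarded to obtain a minimal one).

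There is no serious obstacle here; all of the technical work has already been absorbed into the prior results. The only point requiring any care is the choice of mesoprimary decomposition in the first assertion: one must take the one from Theorem~\ref{t:kmessential} so that the associated mesoprimes of the components are guaranteed to be associated mesoprimes of~$I$ in the sense of Definition~\ref{d:mesoprimarycomponents}(b), rather than mesoprimes associated merely to some arbitrary mesoprimary decomposition. For the second assertion any mesoprimary decomposition suffices, and primariness of each refined component $J_j + I_{j,k}$ is exactly the output of Proposition~\ref{p:mesoprimaryPrimDec}.
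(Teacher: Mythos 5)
This statement is quoted directly from \cite[Theorems~15.6 and~15.11]{kmmeso} and the paper supplies no proof of its own, so there is nothing internal to compare against; what you have written is a plausible reconstruction from the paper's other cited ingredients. Your derivation is essentially sound and in fact mirrors the actual route in \cite{kmmeso}: the first assertion follows from the witness theorem (Theorem~\ref{t:kmessential}) via $\Ass(\kk[\NN^n]/I) \subseteq \bigcup_{x^w} \Ass\bigl(\kk[\NN^n]/W_{x^w}^\sigma(I)\bigr)$ together with the first half of Proposition~\ref{p:mesoprimaryPrimDec}, and the second assertion is immediate from the second half of that proposition since a sum of binomial ideals is binomial. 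Two caveats are worth flagging. First, you silently identify the associated mesoprime of the mesoprimary ideal $W_{x^w}^\sigma(I)$ (in the sense of Definition~\ref{d:mesoprime-mesoprimary}) with the mesoprime at $x^w$ of $I$ (Definition~\ref{d:mesoprimarycomponents}); this is true but is not established anywhere in this paper --- it sits inside \cite[Proposition~12.17]{kmmeso}, so your argument is not self-contained on that point. Second, the theorem as stated concerns an arbitrary binomial ideal in a general monoid algebra $\kk[Q]$, whereas Definition~\ref{d:witness} and hence your use of Theorem~\ref{t:kmessential} only cover positively $A$-graded ideals in $\kk[\NN^n]$; within the paper's declared restricted setting this is fine, but your proof does not reach the stated generality. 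Your closing remark about needing the canonical decomposition of Theorem~\ref{t:kmessential} for the first assertion is a good catch: an arbitrary mesoprimary decomposition could a priori have components whose associated mesoprimes are not associated mesoprimes of $I$.
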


\section{Toral and Andean mesoprimary components}
\label{s:toral}

As we have seen before, the assumption that a binomial ideal $I$ is
$A$-homogeneous carries with it a significant simplification of the
definition of witness from~\cite{kmmeso}. 
In general, the primary components of an $A$-homogeneous ideal are
$A$-homogeneous. If $I$ is $A$-homogeneous, then the coprincipal components
from~\ref{d:mesoprimarycomponents} are $A$-homogeneous as well, since
taking colon with monomials preserves the grading. Thus, any $A$-homogeneous
binomial ideal has an $A$-homogeneous mesoprimary decomposition
by Theorem~\ref{t:kmessential}.

Among all $A$-homogeneous binomial prime ideals, the \emph{toric ideal
  $I_A$} (the lattice ideal corresponding to the saturated lattice
$\ker_\ZZ(A)$ and the trivial character) is of particular interest. An
important property of this ideal is that it is \emph{finely graded},
meaning that the $A$-graded Hilbert function of $\kk[\NN^n]/I_A$ is
either $0$ or $1$.
It was noted in~\cite{dmm,dmm2} that when primary decomposing an
$A$-homogeneous binomial ideal, components corresponding to associated primes which are ``close'' to finely
graded are easier to compute (\cite[Theorem~4.13]{dmm2}). This
behavior subdivides the $A$-homogeneous binomial primes into two classes (Definition~\ref{d:toral}), namely
\emph{toral} (close to toric ideals) and \emph{Andean} (see
Remark~\ref{r:toral}), according to the behavior of their $A$-homogeneous
Hilbert function. 
In this section, we examine the $A$-graded Hilbert functions of
mesoprimes and mesoprimary ideals in the same spirit.

For $\sigma \subset [n]$, denote by $A_\sigma$ the matrix consisting
of the columns of $A$ indexed by $\sigma$.  

\begin{lemma}
\label{l:toralConditions}
For an $A$-homogeneous ($\sigma$-cellular) mesoprimary ideal $I \subset \kk[\NN^n]$, the following are equivalent.  
\begin{enumerate}[(a)]
\item \label{l:toralConditionsHilbertFunction}
The $A$-graded Hilbert function of $\kk[\NN^n]/I$ is bounded above.
\item \label{l:toralConditionsLatticeKernel}
If $L\subset \ZZ^\sigma$ is the lattice underlying
$I_{\textnormal{lat}} = I \cap \kk[\NN^{\sigma}]$, then $\sat(L)=\ker_\ZZ (A_\sigma)$.
\item \label{l:toralConditionsDimensionRank}
$\dim(\kk[\NN^n]/I) = \rank(A_\sigma)$.
\end{enumerate}
\end{lemma}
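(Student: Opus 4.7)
The plan is to first reduce to the lattice ideal $I_{\textnormal{lat}} = I \cap \kk[\NN^\sigma]$ via the cellular hypothesis, then dispatch (b)~$\iff$~(c) as pure lattice-rank arithmetic, and finally connect (a) to the rank of $L$ by counting lattice points in the fibers of $A_\sigma$ modulo $L$. For the reduction, note that $\sigma$-cellularity gives $\mm_{\sigma^c}^N \subseteq I$ for some $N$, so $\kk[\NN^n]/I$ is a finitely generated $A$-graded module over $\kk[\NN^\sigma]/I_{\textnormal{lat}}$, generated by the finite set of monomials in $\kk[\NN^{\sigma^c}]$ that are nonzero modulo $I$. Consequently $\dim(\kk[\NN^n]/I) = \dim(\kk[\NN^\sigma]/I_{\textnormal{lat}}) = |\sigma| - \rank(L)$, and each graded piece $(\kk[\NN^n]/I)_\alpha$ is a finite sum of shifts of graded pieces of $\kk[\NN^\sigma]/I_{\textnormal{lat}}$. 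Conditions (a) and (c) for $I$ are therefore equivalent to the analogous statements for $I_{\textnormal{lat}}$, and it suffices to work with the latter.

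For (b)~$\iff$~(c), since $I$ is $A$-homogeneous, every binomial $x^u - \lambda x^v \in I_{\textnormal{lat}}$ satisfies $A_\sigma(u-v) = 0$, so $L \subseteq \ker_\ZZ(A_\sigma)$. The right-hand side is a saturated sublattice of $\ZZ^\sigma$, so $\sat(L) \subseteq \ker_\ZZ(A_\sigma)$, with equality precisely when $\rank(L) = |\sigma| - \rank(A_\sigma)$, i.e., when $\dim(\kk[\NN^\sigma]/I_{\textnormal{lat}}) = \rank(A_\sigma)$. The forward direction (b)~$\implies$~(a) is also transparent: positivity of the $A$-grading forces $\NN^\sigma \cap L = \{0\}$, so $I_{\textnormal{lat}}$ contains no monomials, and $(\kk[\NN^\sigma]/I_{\textnormal{lat}})_\alpha$ has $\kk$-dimension equal to the number of $L$-equivalence classes in $\NN^\sigma \cap A_\sigma^{-1}(\alpha)$; if (b) holds then any two elements of this fiber differ by an element of $\ker_\ZZ(A_\sigma) = \sat(L)$, yielding at most $[\sat(L):L]$ classes and bounding the Hilbert function uniformly.

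The main obstacle is the converse (a)~$\implies$~(b). Assuming $\rank(L) < \rank(\ker_\ZZ(A_\sigma))$, choose $u_0 \in \ker_\ZZ(A_\sigma) \setminus \sat(L)$ and write $u_0 = u_0^+ - u_0^-$ with $u_0^\pm \in \NN^\sigma$; positivity of the $A$-grading forces both $u_0^\pm$ to be nonzero, since any element of $\ker_\ZZ(A_\sigma) \cap \NN^\sigma$ must be zero. Choosing any $\gamma \in \NN^\sigma$ with strictly positive entries and setting $c = \min_{i\,:\,(u_0^-)_i > 0} \gamma_i/(u_0^-)_i > 0$, for each $t \in \NN$ the elements $\{\,t\gamma + k u_0 : 0 \leq k \leq \lfloor ct \rfloor\,\}$ all lie in $\NN^\sigma \cap A_\sigma^{-1}(t A_\sigma \gamma)$, and are pairwise inequivalent modulo $\sat(L)$ (since $\sat(L)$ is saturated and $u_0 \notin \sat(L)$), hence inequivalent modulo $L$. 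This yields at least $\lfloor ct \rfloor + 1$ equivalence classes in degree $t A_\sigma \gamma$, violating (a). This Ehrhart-style counting is the crux of the proof: keeping the shifted lattice points inside $\NN^\sigma$ while producing linearly many new $L$-classes is exactly what requires the $A$-grading to be positive, so that $\gamma$ can be chosen in the strict interior of the cone and $u_0$ necessarily has entries of both signs.
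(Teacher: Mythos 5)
Your proof is correct, but it takes a genuinely different route from the paper's. The paper passes to an algebraic closure, invokes the canonical primary decomposition $I=\cap_j(I+I_j)$ of a mesoprimary ideal (Proposition~\ref{p:mesoprimaryPrimDec} together with Theorem~\ref{thm:latticePrimaryDecomposition}), sandwiches the Hilbert function of $\kk[\NN^n]/I$ between that of one component and the sum over all components, and in the primary case uses a finite filtration with torsion-free rank-one quotients over the affine semigroup ring $\kk[\NN^n]/(I_{\textnormal{lat}}+\mm_{\sigma^c})$ (following \cite[Example~4.6]{dmm}) to reduce by induction to the prime case. You instead use only $\sigma$-cellularity to exhibit $\kk[\NN^n]/I$ as a finite graded module over $\kk[\NN^\sigma]/I_{\textnormal{lat}}$, which reduces all three conditions to the lattice ideal, and then settle the lattice-ideal case by hand: (b)$\iff$(c) is rank arithmetic from $\sat(L)\subseteq\ker_\ZZ(A_\sigma)$, (b)$\implies$(a) is the bound $[\sat(L):L]$ on the number of $L$-classes per fiber, and the contrapositive of (a)$\implies$(b) is an explicit Ehrhart-style count along $t\gamma+ku_0$. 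What your approach buys is self-containedness: no base change to $\ol\kk$, no primary decomposition or filtration machinery, and a field-independent combinatorial argument that makes visible exactly where positivity of the grading is used (namely, that $u_0$ must have entries of both signs and that fibers of $A_\sigma$ over $\NN^\sigma$ are finite unions of $\sat(L)$-classes). What the paper's approach buys is brevity on the page, since it delegates to structure results already in place and reuses a filtration technique that appears elsewhere in \cite{dmm,dmm2}. One cosmetic point: your phrase ``each graded piece is a finite sum of shifts of graded pieces of $\kk[\NN^\sigma]/I_{\textnormal{lat}}$'' should say ``a quotient of a finite direct sum of shifts''; this gives the upper bound, and the matching lower bound needed to transfer unboundedness from $I_{\textnormal{lat}}$ to $I$ comes from the degree-preserving injection $\kk[\NN^\sigma]/I_{\textnormal{lat}}\hookrightarrow\kk[\NN^n]/I$, which is immediate from $I\cap\kk[\NN^\sigma]=I_{\textnormal{lat}}$ but deserves to be said explicitly.
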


\begin{proof}
Since passing to an algebraic closure of $\kk$ changes neither the
$A$-graded Hilbert function nor the dimension of $\kk[\NN^n]/I$, we
assume for convenience that $\kk$ is algebraically closed.
By Proposition~\ref{p:mesoprimaryPrimDec}, if 
$I_{\textnormal{lat}}$ has primary decomposition $\cap_{j=1}^g I_j$, where $I_j$ are lattice
ideals whose underlying lattice is $\Sat(L)$, then $I = \cap_{j=1}^g
(I+I_j)$ is the (binomial) primary decomposition of $I$. By 
Theorem~\ref{thm:latticePrimaryDecomposition}, 
$\dim(\kk[\NN^\sigma]/I_j) = |\sigma|-\rank(L)$.

We first consider the case that $L$ is saturated, so that $I$ is
primary to (the prime ideal) $I_{\textnormal{lat}}+\mm_{\sigma^c}$. In this case,
proceeding as in~\cite[Example~4.6]{dmm}, $\kk[\NN^n]/I$ has a finite
filtration whose successive quotients are torsion free modules of rank
$1$ over the affine semigroup ring
$\kk[\NN^n]/(I_{\textnormal{lat}}+\mm_{\sigma^c})$. By induction on
the length of this filtration we reduce the proof to the case when $I$
is prime, in which case all the above conditions are clearly equivalent.

When $I$ is not necessarily primary,
the $A$-homogeneous maps
\[
\kk[\NN^n]/I \twoheadrightarrow \kk[\NN^n]/(I+I_j)
\qquad \textnormal{and} \qquad
\kk[\NN^n]/I \hookrightarrow \bigoplus_{j=1}^g \kk[\NN^n]/(I+I_j)
\]
imply that the $A$-graded Hilbert function of $\kk[\NN^n]/I$ is
bounded below by the $A$-graded Hilbert function of 
$\kk[\NN^n]/(I+I_1)$ and bounded above by the sum
of the $A$-graded Hilbert functions of
$\kk[\NN^n]/(I+I_j)$ for $j=1,\dots,g$. 

Note that the $A$-graded Hilbert functions of the rings
$\kk[\NN^n]/(I+I_j)$ are either all bounded or all unbounded, by the
previous argument in the primary case, since the underlying lattice is
the same. Therefore, the
$A$-graded Hilbert function of $\kk[\NN^n]/I$ is bounded above if and
only if the $A$-graded Hilbert function of
$\kk[\NN^n]/(I+I_1)$ is bounded above. Noting that the rings 
$\kk[\NN^n]/(I+I_j)$, $j=1,\dots,g$, have the same dimension, which 
thus equals $\dim(\kk[\NN^n]/I)$, the proof of the desired
equivalences is reduced to the primary case.
\end{proof}

\begin{definition}
\label{d:toral}
Let $I \subset \kk[\NN^n]$ be an $A$-homogeneous mesoprimary ideal.
We say that $\kk[\NN^n]/I$ (or $I$ itself) is \emph{toral} if one of the equivalent conditions of
Lemma~\ref{l:toralConditions} is satisfied. Otherwise, $\kk[\NN^n]/I$
and $I$ are called \emph{Andean}.  
Note that both of these properties depend on the $A$-grading.  
\end{definition}

\begin{remark}
\label{r:toral}
The name ``Andean'' is a pictorial description of the grading of
quotients by Andean ideals. If $I \subseteq \kk[\NN^n]$ is an Andean
prime, the set
\[
\{ \beta \in \ZZ^d \mid (\kk[\NN^n]/I)_\beta \neq 0 \}
\]
consists of the lattice points on a translate of a face of the cone
$\RR_{>0}A$ (not necessarily a proper face). Since the Hilbert function is unbounded, the picture of
a very high, long and thin mountain range comes to mind.
See also~\cite[Remark~5.3]{dmm2}.
\endrk
\end{remark}

\begin{example}[{\cite[Example~1.7]{dmm2}}]
Consider $I = \<xz - y,xw - y\> = \<z - w, xw - y\> \cap \<x, y\>$, graded such that $\deg(x) = (1,0)$, $\deg(y) = (1,1)$, and $\deg(z) = \deg(w) = (0,1)$.  We claim the~first component is toral and the second is Andean.

Indeed, $\kk[x,y,z,w]/\<z - w, xw - y\>$ has Hilbert function 1 in degree $(a,b) \in \ZZ_{\ge 0}^2$.  On the other hand, the Hilbert function of $\kk[x,y,z,w]/\<x, y\> = \kk[z,w]$ is 0 in degree $(a,b)$ whenever $a > 0$, while in degree $(0,b)$ with $b \geq 0$, the Hilbert function is $b+1$, which is unbounded.  

To make this more interesting, one can consider the Hilbert function of $\kk[x,y,z,w]/I$.  In this case, the Hilbert function is 1 in degree $(a,b)$ when $a$ is positive, and $b+1$ when $a = 0$. 
\end{example}

\begin{corollary}
\label{c:toralConditions}
Each prime associated to a toral mesoprimary ideal is toral, and every
prime associated to an Andean mesoprimary ideal is Andean.   
\end{corollary}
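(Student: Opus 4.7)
The plan is to reduce the statement to a purely lattice-theoretic equality, using condition~\ref{l:toralConditionsLatticeKernel} of Lemma~\ref{l:toralConditions} as the characterization of torality. Since torality and the Hilbert function are unaffected by passing to the algebraic closure, we may assume $\kk = \bar\kk$ throughout, so that Proposition~\ref{p:mesoprimaryPrimDec} and Theorem~\ref{thm:latticePrimaryDecomposition} both apply.

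Let $I \subset \kk[\NN^n]$ be a $\sigma$-cellular mesoprimary ideal, put $I_{\textnormal{lat}} = I \cap \kk[\NN^\sigma]$, and let $L \subset \ZZ^\sigma$ be the lattice underlying $I_{\textnormal{lat}}$. By Proposition~\ref{p:mesoprimaryPrimDec}, the associated primes of $I$ are exactly the minimal primes of the associated mesoprime $\kk[\NN^n] I_{\textnormal{lat}} + \mm_{\sigma^c}$, namely the ideals $P_j = I(\chi_j) + \mm_{\sigma^c}$ for $j=1,\dots,g$, where (by Theorem~\ref{thm:latticePrimaryDecomposition}) each $I(\chi_j)$ is a prime lattice ideal in $\kk[\NN^\sigma]$ whose underlying lattice is $\Sat(L)$.

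Each $P_j$ is itself $\sigma$-cellular, is prime (hence trivially mesoprimary), and satisfies $P_j \cap \kk[\NN^\sigma] = I(\chi_j)$, so its underlying lattice is the saturated lattice $\Sat(L)$. Applying Lemma~\ref{l:toralConditions}\ref{l:toralConditionsLatticeKernel} to $P_j$ (noting $\sat(\Sat(L)) = \Sat(L)$) shows that $P_j$ is toral if and only if $\Sat(L) = \ker_\ZZ(A_\sigma)$. Applying the same condition to $I$ itself yields the identical criterion. Therefore $I$ is toral if and only if every associated prime of $I$ is toral, and dually $I$ is Andean if and only if every associated prime is Andean, proving the corollary.

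The only potential obstacle is making sure that the associated primes all share the same underlying lattice; this is guaranteed by Theorem~\ref{thm:latticePrimaryDecomposition}, since the characters $\chi_j$ all extend to the common saturation $\Sat(L)$. Everything else is a direct invocation of results already established in the excerpt.
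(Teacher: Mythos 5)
Your proof is correct and follows essentially the route the paper intends: the corollary is stated without a separate proof precisely because, as you observe, Proposition~\ref{p:mesoprimaryPrimDec} identifies the associated primes of $I$ with the minimal primes of its associated mesoprime, all of which have underlying lattice $\Sat(L)$, so condition~\ref{l:toralConditionsLatticeKernel} of Lemma~\ref{l:toralConditions} gives the same torality criterion for $I$ and for each of its associated primes. Your verification that each $P_j$ is itself mesoprimary (so that Definition~\ref{d:toral} applies to it) is a worthwhile detail the paper leaves implicit.
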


If $I$ is mesoprimary, then $I$ is either toral or Andean. We note that $I$ is
toral if and only if $\kk[\NN^n]/I$ is a toral module in the sense
of~\cite[Definition~4.1]{dmm2}; and $I$ is Andean if and only if
$\kk[\NN^n]/I$ is an Andean module in the sense of~\cite[Definition~5.1]{dmm2}.

\begin{lemma}
\label{lemma:embeddedToral}
Suppose $I \subseteq J$ are $A$-homogeneous mesoprimary ideals. If $I$ is toral, then so is $J$. 
\end{lemma}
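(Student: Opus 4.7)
The plan is to route the argument through the Hilbert function characterization, namely condition \ref{l:toralConditionsHilbertFunction} of Lemma~\ref{l:toralConditions}. Since $I$ is mesoprimary and toral, that lemma gives a uniform upper bound on $\dim_\kk (\kk[\NN^n]/I)_\alpha$ as $\alpha$ ranges over $\ZZ^d$. Next, I would observe that the inclusion $I \subseteq J$ induces an $A$-graded surjection $\kk[\NN^n]/I \twoheadrightarrow \kk[\NN^n]/J$, so the $A$-graded Hilbert function of $\kk[\NN^n]/J$ is pointwise dominated by that of $\kk[\NN^n]/I$, hence is also bounded above. Finally, since $J$ is mesoprimary by hypothesis, I would invoke Lemma~\ref{l:toralConditions} in the reverse direction to conclude that $J$ is toral.

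The only point that deserves a sanity check is that the cellular indexing sets for $I$ and $J$ need not agree: indeed, if $I$ is $\sigma_I$-cellular and $J$ is $\sigma_J$-cellular, then any variable nilpotent modulo $I$ is automatically nilpotent modulo $J$, so $\sigma_J \subseteq \sigma_I$. This mismatch is harmless, however, because condition~\ref{l:toralConditionsHilbertFunction} of Lemma~\ref{l:toralConditions} refers only to the $A$-graded Hilbert function of the quotient and makes no mention of $\sigma$. Consequently there is no genuine obstacle; the substantive work has already been absorbed into Lemma~\ref{l:toralConditions}, and this statement is essentially a corollary of the equivalences recorded there combined with the trivial monotonicity of the graded Hilbert function under surjection.
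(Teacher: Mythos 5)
Your argument is correct and is essentially identical to the paper's own proof, which also observes that the surjection $\kk[\NN^n]/I \twoheadrightarrow \kk[\NN^n]/J$ forces the $A$-graded Hilbert function of $\kk[\NN^n]/J$ to be dominated by that of $\kk[\NN^n]/I$ and then applies the Hilbert-function characterization of torality. Your extra remark that the cellular sets $\sigma_I$ and $\sigma_J$ may differ but that this is irrelevant to the Hilbert-function criterion is a reasonable additional sanity check, not a deviation in method.
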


\begin{proof}
Note that the $A$-graded Hilbert function of $\kk[\NN^n]/J$ is bounded
above by the $A$-graded Hilbert function of $\kk[\NN^n]/I$. If the
latter is bounded, then so is the former.
\end{proof}

A binomial ideal may have both Andean and toral minimal and embedded
primes, and the minimal prime corresponding to a toral embedded prime
may be Andean. However, any embedded prime corresponding to a toral
minimal prime must be toral. See the examples below.

On the other hand, 
whenever a cellular
$A$-homogeneous binomial ideal $I$ has at least one Andean component, then
all of the toral primes must be embedded.  Indeed, the minimal primes
of $I$ correspond to the minimal primes of the lattice ideal $I \cap
\kk[\NN^\sigma]$, and therefore, once this is Andean, all the minimal
primes are Andean, and any remaining components (including every toral
component) must be embedded.

\begin{example}
\label{e:cellularembeddedtoral}
For cellular binomial ideals, toral primes may be embedded in Andean
primes, but not the other way around.  For example, the cellular ideal
$I \subset \kk[a,b,c,d,x,y]$ given by 
$$
I = \<ad - bc, x(ac - b^2), x(bd - c^2), y(ac - b^2), y(bd - c^2), x^2, xy, y^2\>
$$
is positively graded via 
$\deg(a) = \left[ \begin{smallmatrix} 1 \\ 0 \end{smallmatrix}\right]$,
$\deg(b) = \left[ \begin{smallmatrix} 1 \\ 1 \end{smallmatrix}\right]$,
$\deg(c) = \left[ \begin{smallmatrix} 1 \\ 2 \end{smallmatrix}\right]$,
$\deg(d) = \left[ \begin{smallmatrix} 1 \\ 3 \end{smallmatrix}\right]$,
$\deg(x) = \left[ \begin{smallmatrix} 1 \\ 4 \end{smallmatrix}\right]$,
and
$\deg(y) = \left[ \begin{smallmatrix} 1 \\ 5 \end{smallmatrix}\right]$.

The ideal $I$ has three coprincipal components in the decomposition
from Theorem~\ref{t:kmessential}, with essential witnesses $1$, $x$,
and $y$.  The first yields an Andean component, and the remaining two
components have the same associated (toral) mesoprime with different
Artinian parts.  In particular, 
$$
\begin{array}{rcl}
I
&=& \<ad - bc, x,y\> \cap \<ad - bc, ac - b^2, bd - c^2, x^2, y\> \cap \<ad - bc, ac - b^2, bd - c^2, x, y^2\>\\
&=& \<ad - bc, x,y\> \cap \<ad - bc, ac - b^2, bd - c^2, x^2, xy,y^2\> \\
\end{array}
$$
upon combining the last two coprincipal components to a single
mesoprimary component.  See Figure~\ref{f:cellularembeddedtoral} for
picture of the nilpotent monomials of $I$.  
\endrk
\end{example}

\begin{figure}
\begin{center}
\includegraphics[width=1.2in]{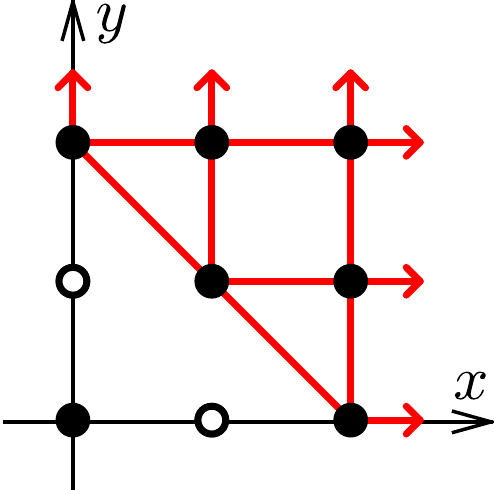}
\end{center}
\caption{Depicted above are the nilpotent monomial equivalence classes
  modulo the cellular ideal $I$ in
  Example~\ref{e:cellularembeddedtoral}.  The origin is a witness for
  a principal lattice ideal, and the monomials $x$ and $y$ are both
  witnesses for the twisted cubic.} 
\label{f:cellularembeddedtoral}
\end{figure}

\section{Some combinatorial savings when computing toral components}
\label{sec:setTo1}

An important result in~\cite{dmm} is that toral primary components of
$A$-homogeneous binomial ideals are easier to compute than Andean
ones. This statement is~\cite[Theorem~4.13]{dmm}, which contains a
minor error (see Remark~\ref{rmk:dmmErrorSetTo1}). 

We first recall how primary components are computed in~\cite{dmm}. 
Suppose that $\sigma \subseteq [n]$, $I(\rho)$ is a prime lattice ideal
in $\kk[\NN^\sigma]$, and $P=I(\rho)+\mm_{\sigma^c}$
is a toral associated prime of an $A$-homogeneous binomial ideal $I$. Then 
\cite[Theorem~3.2]{dmm} states that
in the case that $\kk$ is an algebraically closed field of
characteristic zero, the
$P$-primary component of $I$ may be chosen of the form
\begin{equation}
\label{eqn:primDec}
\big( (I+I(\rho)+K): (\prod_{i\in \sigma}x_i)^\infty \big) + M
\end{equation}
where $K, M \subseteq \kk[\NN^{\sigma^c}]$, $K$ is generated by sufficiently
high powers of the variables $x_j$, $j\notin \sigma$, and $M$ is a
monomial ideal computed combinatorially. If $P$ is a minimal prime of
$I$, then we may choose $K = \<0\>$.

We remark that the ideal $M$ above does not necessarily contain all monomials belonging to the
corresponding primary component. Even when $P$ is minimal, $\big( (I+I(\rho)): (\prod_{i\in \sigma}x_i)^\infty \big)$
may contain monomials in $\kk[\NN^{\sigma^c}]$ that belong neither to $I$ nor to $M$.

\begin{example}
\label{ex:MisNotEnough}
Let $I = \< z^2 - w^2, x(z - w), x^2 \> \subset \kk[x,z,w]$, with the
usual $\ZZ$-grading on the polynomial ring. Then the primary component
associated to the minimal prime $\<z+w,x\>$ is $\<z+w,x\>$. In this case, the monomial
ideal $M$ from~\eqref{eqn:primDec} is $M=\<x^2\>$; the monomial $x$
comes from performing $((I+\<z+w\>):(zw)^\infty)$.
\endrk
\end{example}

The monomial ideal $M$ from~\eqref{eqn:primDec} is computed by considering a congruence on
the monoid $\ZZ^\sigma\times \NN^{\sigma^c}$. The gist
of~\cite[Theorem~4.13]{dmm} is that, for toral primes, the computation 
of the monomial ideal $M$ can be performed by considering a congruence
on the (much smaller) monoid $\NN^{\sigma^c}$. This leads to
significant combinatorial savings when computing toral primary
components. 

\begin{theorem}
\label{thm:correctThm4.13}
Let $I$ be an $A$-homogeneous binomial ideal in $\kk[\NN^n]$, where $\kk$
is an algebraically closed field of characteristic zero. Let $\sigma
\subseteq [n]$, $I(\rho) \subseteq \kk[\NN^\sigma]$ a prime lattice
ideal corresponding to a character $\rho:L\to \kk^*$, and assume that
$P=I(\rho)+\mm_{\sigma^c}$ is a toral associated prime of $I$. Let
$\nu = (\nu_j)_{j \in \sigma} \in (\kk^*)^\sigma$ be a zero of $I \cap \kk[\NN^\sigma]$,
and set $\overline{I} = I\cdot \kk[\NN^n]/\<x_j - \nu_j \mid j \in
\sigma\>$. We consider $\overline{I}$ as an ideal in
$\kk[\NN^\sigma]$. 
Then a valid choice for the $P$-primary component
of $I$ is 
\begin{equation}
\label{eqn:toralComponent}
\big( (I+I(\rho)+K): (\prod_{i\in \sigma}x_i)^\infty \big) + \overline{M},
\end{equation}
where $K$ is an ideal generated by sufficiently high powers of the
variables indexed by $\sigma^c$, and $\overline{M}$ is the monomial ideal
combinatorially produced by~\cite[Theorem~3.2]{dmm} for the associated
prime $\mm_{\sigma^c}$ of $\overline{I}$.
\end{theorem}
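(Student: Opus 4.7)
The plan is to compare the candidate $P$-primary component~\eqref{eqn:toralComponent} with the one produced by \cite[Theorem~3.2]{dmm}, which has the same shape but with a monomial ideal $M \subseteq \kk[\NN^{\sigma^c}]$ constructed from a congruence on the larger monoid $\NN^\sigma \times \NN^{\sigma^c}$, and to show that the combinatorially simpler $\overline{M}$ obtained after specialization is a valid substitute for $M$. The toral hypothesis, via Lemma~\ref{l:toralConditions}, guarantees that the lattice underlying $I \cap \kk[\NN^\sigma]$ has saturation $\ker_\ZZ(A_\sigma)$; combined with $\kk$ being algebraically closed, this ensures that $V(I \cap \kk[\NN^\sigma])$ has $\kk$-rational points in $(\kk^*)^\sigma$, so a choice of $\nu$ as in the statement exists and the specialization map $\pi \colon \kk[\NN^n] \to \kk[\NN^{\sigma^c}]$ sending $x_j \mapsto \nu_j$ for $j \in \sigma$ is well-defined.

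The main steps I would carry out, in order, are as follows. First, verify that $\mm_{\sigma^c}$ is an associated prime of $\overline{I} = \pi(I)$ corresponding to $P$; this relies on $\pi$ killing $I \cap \kk[\NN^\sigma]$, so that $P$ and $\mm_{\sigma^c}$ agree after specialization. Second, apply \cite[Theorem~3.2]{dmm} to the pair $(\overline{I}, \mm_{\sigma^c})$: since $\mm_{\sigma^c}$ has trivial lattice part, the combinatorial output of that theorem is an honest monomial ideal $\overline{M} \subseteq \kk[\NN^{\sigma^c}]$, and no auxiliary congruence on a larger monoid is needed. Third, verify the three defining properties of a $P$-primary component for the ideal in~\eqref{eqn:toralComponent}: $P$-primariness follows from cellularity of the saturation combined with $\overline{M}$ killing the nilpotent $\sigma^c$-part; the containment of $I$ in that ideal is immediate from $I \subseteq I + I(\rho) + K$ together with monotonicity of saturation and sum; and the intersection property with the remaining primary components follows by induction on the partial order of associated primes, as in the proof of \cite[Theorem~3.2]{dmm}, once $K$ is chosen large enough to dominate contributions from primes properly containing $P$.

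The main obstacle is the third step, and specifically proving that $\overline{M}$ records enough monomial information to replace $M$ after saturation by $\prod_{i\in\sigma} x_i$. This is exactly the point where the toral hypothesis is indispensable. By Lemma~\ref{l:toralConditions}, the $A$-graded Hilbert function of $\kk[\NN^n]/I$ is bounded above, so the $\sigma$-variables act on the $P$-localization as invertible scalars modulo $\ker_\ZZ(A_\sigma)$; specialization at any such $\nu$ therefore loses no combinatorial information about monomial equivalence classes in $\NN^{\sigma^c}$, and the lift of $\overline{M}$ to $\kk[\NN^n]$ coincides with $M$ after saturation. Without the toral hypothesis, unbounded Hilbert function growth could allow inequivalent $\sigma^c$-monomials to become equivalent after specialization and the formula would fail; making the matching between $M$ and the lift of $\overline{M}$ precise is the technical heart of the proof, and corrects the oversight in \cite[Theorem~4.13]{dmm} noted in Remark~\ref{rmk:dmmErrorSetTo1}.
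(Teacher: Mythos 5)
A point of orientation first: the paper does not actually prove Theorem~\ref{thm:correctThm4.13}. Its entire justification is Remark~\ref{rmk:dmmErrorSetTo1}, namely that the statement is the corrected form of \cite[Theorem~4.13]{dmm} (variables set to a zero $\nu$ of $I\cap\kk[\NN^\sigma]$ rather than to $1$), and that the proof given in \cite{dmm} ``is correct, once the statement is suitably modified.'' Your proposal attempts to reconstruct that proof, and your skeleton --- specialize at $\nu$, run \cite[Theorem~3.2]{dmm} for the prime $\mm_{\sigma^c}$ of $\overline{I}$, then argue that the resulting $\overline{M}$ can replace the monomial ideal $M$ that \cite[Theorem~3.2]{dmm} would produce directly for $P$ --- is the right shape.

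The gap is in the step you yourself flag as the technical heart, and it is twofold. First, you invoke Lemma~\ref{l:toralConditions} to conclude that the $A$-graded Hilbert function of $\kk[\NN^n]/I$ is bounded; that lemma applies to mesoprimary ideals, whereas here $I$ is an arbitrary $A$-homogeneous binomial ideal which may have Andean associated primes and hence unbounded Hilbert function. What is toral is $P$ (equivalently the relevant subquotient in the toral/Andean filtration of \cite{dmm2}), and the argument must be run at that level. Second, the assertion that ``specialization at $\nu$ loses no combinatorial information about monomial equivalence classes in $\NN^{\sigma^c}$'' is essentially the conclusion of \cite[Theorem~4.13]{dmm} restated, not an argument for it: nothing in the sketch rules out two $\sigma^c$-monomials becoming identified modulo $\overline{I}$ (so that a monomial wrongly enters $\overline{M}$) without being identified modulo $\big((I+I(\rho)+K):(\prod_{i\in\sigma}x_i)^\infty\big)$. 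Closing this requires lifting and non-lifting statements between $I$ and $\overline{I}$ of exactly the kind the paper later proves as Propositions~\ref{prop:lifting} and~\ref{prop:nonLifting} in the witness setting; without such an analogue the matching of $M$ with the lift of $\overline{M}$ remains an assertion. A minor further point: the existence of $\nu$ is a hypothesis of the theorem and does not follow from, nor require, torality.
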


\begin{remark}
\label{rmk:dmmErrorSetTo1}
We note that the statement of~\cite[Theorem~4.13]{dmm} contains a
minor error. Instead of setting the variables indexed 
by $\sigma$ to values given by a zero of $I \cap \kk[\NN^\sigma]$,
as we do in Theorem~\ref{thm:correctThm4.13},
those variables are set to $1$, which is a valid choice only 
when $(1)_{i\in\sigma} \in (\kk^*)^\sigma$ is a root of $I \cap
\kk[\NN^\sigma]$. If this is not the case, then setting the variables
indexed by $\sigma$ to $1$ introduces constants to $\overline{I}$.
The proof of~\cite[Theorem~4.13]{dmm} is correct, once the statement
is suitably modified.
\endrk
\end{remark}

It has been one of the goals of this project to provide 
an analogous result for computing witnesses and
coprincipal or mesoprimary components of $A$-homogeneous binomial ideals 
corresponding to toral mesoprimes. A general statement is
unfortunately out of reach.

\begin{remark}
\label{rmk:BetterToralNotPossible}
We emphasize that the
monomials of an associated mesoprime cannot necessarily be obtained by
evaluating the $\sigma$-variables.  For example, the mesoprimary
decomposition of the ideal $I = \< z^2 - w^2, x(z - w), x^2 \> \subset
\kk[x,z,w]$ constructed in Theorem~\ref{t:kmessential} is  
$$I = \< z^2 - w^2, x \> \cap \< z - w, x^2 \>,$$ 
and both components have $\<z - w, x\>$ as an associated prime.  As
such, the primary decomposition  
$$I = \< z + w, x \> \cap \< z - w, x^2 \>$$
results from taking the canonical primary decomposition of each
mesoprimary component and collecting both components with associated
prime $\<z - w, x\>$.   
\endrk
\end{remark}

While there may not be a general result along the lines of
Theorem~\ref{thm:correctThm4.13} for mesoprimary decomposition, we do
provide in Theorem~\ref{thm:whatThereIs} a special case in which lower-dimensional combinatorics can be
used for computations.

We start by introducing terminology and providing auxiliary results.

\begin{definition}
The \emph{support} of a polynomial $h$, denoted $\supp(h)$, is the set of
monomials that appear in $h$ with nonzero coefficient. 
\end{definition}

\begin{conv}
\label{conv:savings}
Until the end of this section, we use the following notation and assumptions.
Let $I$ be an $A$-homogeneous binomial ideal, where $A$ is a $d \times n$
matrix of rank $d<n$. Let $\sigma \subset [n]$, 
with $|\sigma|=d$, be such that
the matrix $A_\sigma$ consisting of the columns of $A$ indexed by
$\sigma$ has full rank $d$. We assume that $(1)_{i\in\sigma} \in
\kk^\sigma$ is a zero of $I \cap \kk[\NN^\sigma]$.
Let $\overline{I}$ be the ideal
$I \cdot (\kk[\NN^n]/\<x_i-1 \mid i\in \sigma\>)$, considered as an ideal in
$\kk[\NN^{\sigma^c}]$.
\end{conv}

\begin{proposition}
\label{prop:lifting}
Under the notation and assumptions of Convention~\ref{conv:savings},
if $g =\sum_{i=1}^r \lambda_ix^{u_i}$, where $\lambda_i \in
\kk^*$ and $u_i \in \NN^{\sigma^c}$ for $i=1,\dots,r$, 
then $g \in \overline{I}$ if and
only if there are $v_1,\dots,v_r\in \NN^{\sigma}$ such that 
$f = \sum_{i=1}^r \lambda_i x^{u_i}x^{v_i} \in I$.
\end{proposition}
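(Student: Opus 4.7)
The ``if'' direction is immediate: applying the substitution $\pi\colon \kk[\NN^n] \to \kk[\NN^{\sigma^c}]$ that sends $x_i \mapsto 1$ for $i \in \sigma$ to $f = \sum_i \lambda_i x^{u_i} x^{v_i} \in I$ produces $g$, and by definition $\overline{I} = \pi(I)$.

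For the other direction, my plan is to use the $A$-homogeneous structure of $I$ to massage an arbitrary lift of $g$ into the required form. First I would choose any $f \in I$ with $\pi(f) = g$ and decompose it into $A$-graded components $f = \sum_\beta f_\beta$, each of which lies in $I$ by $A$-homogeneity. The key observation, which is where the hypothesis $|\sigma| = d = \rank(A_\sigma)$ enters, is that within each $f_\beta$ the $\sigma$-part of a monomial is determined by the $\sigma^c$-part: a monomial $x^{u+v}$ with $u\in\NN^{\sigma^c}$ and $v\in\NN^\sigma$ appearing in $f_\beta$ must satisfy $v = A_\sigma^{-1}(\beta - A_{\sigma^c}u) =: v_{u,\beta}$. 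Hence $f_\beta = \sum_u c_{u,\beta}\, x^{u+v_{u,\beta}}$, and matching $\pi(f) = g$ yields $\sum_\beta c_{u_i,\beta} = \lambda_i$ while $\sum_\beta c_{u,\beta} = 0$ for $u\notin\{u_1,\dots,u_r\}$.

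Next I would regroup by cosets of the sublattice $A_\sigma(\ZZ^\sigma)\subseteq\ZZ^d$: writing $f = \sum_c f^{(c)}$ where $f^{(c)}$ collects the $f_\beta$ with $\beta$ in coset $c$, each $f^{(c)}$ lies in $I$, and the images $\pi(f^{(c)})$ have pairwise disjoint support in $\NN^{\sigma^c}$ (since any monomial of $f_\beta$ forces $A_{\sigma^c}u \equiv \beta$ modulo $A_\sigma(\ZZ^\sigma)$). Matching with $g$ therefore partitions the $u_i$ according to cosets $c_i := A_{\sigma^c}u_i + A_\sigma(\ZZ^\sigma)$, reducing the task to producing, for each coset $c$ separately, an element $\sum_{i:c_i=c}\lambda_i\, x^{u_i+v_i}\in I$, which one then sums over $c$. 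Within a fixed $c$, I would pick $\beta^\ast\in c$ large enough that $v'_\beta := A_\sigma^{-1}(\beta^\ast - \beta)\in\NN^\sigma$ for every $\beta$ in the finite support of $f^{(c)}$, and set $\tilde f^{(c)} := \sum_\beta x^{v'_\beta} f_\beta \in I$. This is $A$-homogeneous of degree $\beta^\ast$, with monomial expansion $\sum_u \bigl(\sum_{\beta\in c} c_{u,\beta}\bigr)\, x^{u+v_{u,\beta^\ast}}$, and by the coset version of the coefficient identities above it collapses to $\sum_{i:c_i=c}\lambda_i\, x^{u_i+v_{u_i,\beta^\ast}}$. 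Setting $v_i := v_{u_i,\beta^\ast}$ and summing over cosets yields the required $f$.

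The step I expect to be most delicate is verifying that such a $\beta^\ast$ can be chosen in coset $c$ to dominate all finitely many $\beta$ in the support of $f^{(c)}$: this hinges on positivity of the $A$-grading (so the affine semigroup $A_\sigma(\NN^\sigma)$ is pointed and full-dimensional inside $\ZZ^d$) combined with the coset structure (so the shifts $v'_\beta$ remain both integral and nonnegative, and the resulting $v_{u_i,\beta^\ast}$ likewise lie in $\NN^\sigma$). Once this combinatorial fact is in hand, the remainder of the argument is bookkeeping with the $A$-graded decomposition.
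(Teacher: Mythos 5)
Your proof is correct, but it follows a genuinely different route from the paper's. The paper writes $g$ as a $\kk$-linear combination of images $\overline{b}_1,\dots,\overline{b}_s$ of binomials $b_j \in I$, normalizes so that the $\overline{b}_j$ have distinct supports, and then runs an iterative procedure: whenever a monomial $x^m$ occurs in several $\overline{b}_j$, it multiplies the corresponding $b_j$ by monomials in the $\sigma$-variables (built from least common multiples) so that the lifts agree at $x^m$, and repeats, monomial by monomial; the concluding ``continuing in this manner'' induction is the delicate part of that argument. You instead start from an arbitrary preimage $f \in I$ of $g$, split it into $A$-graded components, use the invertibility of $A_\sigma$ to see that each component is rigid (its $\sigma$-exponents are determined by its $\sigma^c$-exponents and its degree), and then group components by coset of $A_\sigma(\ZZ^\sigma)$ and shift each group to a common degree $\beta^\ast$. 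This is shorter, avoids the support-merging case analysis entirely, and makes transparent exactly where the hypotheses $|\sigma| = d = \rank(A_\sigma)$ enter. Two small remarks: the step you flag as delicate does not actually require positivity of the grading --- writing each $\beta$ in the coset as $\beta_0 + A_\sigma w_\beta$, any $w^\ast$ that dominates the finitely many $w_\beta$ componentwise gives $v'_\beta = w^\ast - w_\beta \in \NN^\sigma$ --- and the nonnegativity of $v_{u_i,\beta^\ast}$ is automatic, since $v_{u_i,\beta^\ast} = v_{u_i,\beta} + v'_\beta$ for any $\beta$ with $c_{u_i,\beta} \neq 0$, and such a $\beta$ exists because $\sum_\beta c_{u_i,\beta} = \lambda_i \neq 0$.
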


\begin{proof}
We note that if $f = \sum_{i=1}^r \lambda_i x^{u_i}x^{v_i}$ as above
belongs to $I$, then $g = \sum_{i=1}^r \lambda_ix^{u_i} \in \overline{I}$.

Assume $g = \sum_{i=1}^r \lambda_ix^{u_i}\in\overline{I}$. Since
$\overline{I}$ is a binomial ideal, there are $\mu_1,\dots,\mu_s\in
\kk^*$  and binomials $\overline{b}_1,\dots,\overline{b}_s
\in \overline{I}\subset \kk[\NN^{\sigma^c}]$, where each
$\overline{b}_i$ denotes the image in $\overline{I}$ of a binomial $b_i \in I$,
such that
$g =\mu_1\overline{b}_1+\dots+ \mu_s \overline{b}_s$. 
We may assume that $b_1,\dots,b_s$ are $A$-homogeneous and no two of
them have the same support. Since
$A_\sigma$ is invertible, we see that if $\overline{b}_i$ has one
term, then $b_i$ has one term. 
Moreover, we may choose $b_1,\dots,b_s$
in such a way that if $i\neq j$, then $\overline{b}_i$ and
$\overline{b}_j$ have different supports. To see this, suppose that
$b_i = x^wx^p-\kappa x^vx^q$ and $b_j = x^{w'}x^p-\kappa'x^{v'}x^q$,
where $p,q \in \NN^{\sigma^c}$, $v,v',w,w' \in \NN^{\sigma}$ and
$\kappa,\kappa' \in \kk$. Since $b_i$ and $b_j$ are $A$-homogeneous,
and $A_\sigma$ is invertible, we have that $w-v=w'-v'$, so that
$x^{w'}b_i$ and $x^wb_j$ have the same support. If $x^{w'}b_i=x^wb_j$,
we may use this binomial instead of $b_i$ and $b_j$. If
$x^{w'}b_i\neq x^wb_j$, then $x^{w+w'}x^p, x^{v+v'}x^q \in I$, and we
may use these monomials instead of $b_i$ and $b_j$.

If the binomials $\overline{b}_1,\dots,\overline{b}_s$ 
have pairwise disjoint supports, then $f =\sum_{i=1}^s \mu_ib_i \in
I$ satisfies the required conditions.

Now suppose that $x^m\in \kk[\NN^{\sigma^c}]$ belongs to the support
of at least two of the binomials $\overline{b}_i$. 
For each $i$ such that $x^m$ is in the support of 
$\overline{b}_i$, there exists
$x^{q_i} \in \kk[\NN^{\sigma}]$ such that $x^mx^{q_i}$ is in the
support of $b_i$. Let $x^q \in \kk[\NN^{\sigma}]$ be the
least common multiple of all such $x^{q_i}$. Then the coefficient of $x^m$
in the sum of the $\mu_i\overline{b}_i$ over all $\overline{b}_i$
containing $x^m$ in their support equals the coefficient of
$x^qx^m$ in the sum of the $\mu_i x^{q-q_i}b_i$ over all $b_i$ containing a
multiple of $x^m$ in their support.

If $x^m$ is the only monomial appearing the support of more than one
$\overline{b_i}$, then the polynomial $f$ constructed as the sum 
over $\overline{b}_i$ containing $x^m$ of $\mu_i x^{q-q_i}b_i$ plus
the sum over $\overline{b}_i$ not containing $x^m$ of $\mu_i b_i$
satisfies the required conditions. 

If there exists $x^{m'} \in \kk[\NN^{\sigma^c}]$, $x^{m'} \neq x^m$, 
appearing in the support of more than one of the $\overline{b}_j$, 
and $x^m$, $x^{m'}$ are the only two monomials with this property,
we
repeat the same procedure as before, obtaining binomials $\mu_j x^{q'-q'_j}b_j$,
with the proviso that if the support of $\overline{b}_\ell$ equals
$\{x^m,x^{m'}\}$, then we use 
\[
f=\mu_\ell x^{q+q'-q_\ell-q'_\ell}b_\ell +
\sum_{\substack{i\neq \ell  \\ x^m \in \supp(\overline{b}_i)}} \mu_i x^{q+q'-q_\ell'-q_i} b_i 
+
\sum_{\substack{j \neq \ell \\ x^{m'} \in \supp(\overline{b}_j)}} \mu_j
  x^{q+q'-q_\ell-q'_j}b_j
+
\sum_{x^m,x^{m'} \notin \supp(\overline{b}_k)} \mu_k b_k.
\]

If there is $x^{m''} \in \kk[\NN^{\sigma^c}]$, different from $x^m$ and
$x^{m'}$ that appears in more than one support, we repeat the
procedure, taking care that if there is a binomial $\overline{b}_i$ with support
$\{x^m, x^{m''}\}$ and/or a binomial $\overline{b}_j$ with support $\{
x^{m'}, x^{m''}\}$, then all of the binomials $b_k$ involving multiples
of $x^m$ or $x^{m'}$ need to be multiplied by additional monomials in
$\kk[\NN^\sigma]$.

Continuing in this manner, we obtain the desired $f$.
\end{proof}

\begin{proposition}
\label{prop:nonLifting}
Under the assumptions and notation of Convention~\ref{conv:savings},
if $p \in \kk[\NN^n]$ is an $A$-homogeneous polynomial
not belonging to $I_\sigma=\big(I : (\prod_{i\in \sigma} x_i)^\infty
\big)$, then its image $\overline{p}$ in $\kk[\NN^{\sigma^c}]$ under
the map that sets to $1$ the variables indexed by $\sigma$ does not belong
to $\overline{I}$. 
\end{proposition}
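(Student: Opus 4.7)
The strategy is to prove the contrapositive: assuming $\overline{p} \in \overline{I}$, we deduce $p \in I_\sigma$. Passing to the Laurent extension $S = \kk[\NN^n][x_i^{-1} : i \in \sigma] = \kk[\NN^{\sigma^c} \oplus \ZZ^\sigma]$, the identity $I_\sigma = IS \cap \kk[\NN^n]$ reduces the task to showing $p \in IS$. Let $\alpha$ denote the $A$-degree of $p$. Since $|\sigma| = d = \rank(A_\sigma)$, the matrix $A_\sigma$ is square of full rank, hence invertible over $\QQ$. Writing $p = \sum_i \lambda_i x^{u_i} x^{v_i}$ with $u_i \in \NN^{\sigma^c}$ and $v_i \in \NN^\sigma$, the $A$-homogeneity of $p$ forces $v_i = A_\sigma^{-1}(\alpha - A_{\sigma^c} u_i)$, so the $u_i$ are pairwise distinct and $\overline{p} = \sum_i \lambda_i x^{u_i}$.

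The central construction is a $\kk$-linear \emph{lift} map $L_\alpha : \kk[\NN^{\sigma^c}] \to S$ defined on monomials by
\[
L_\alpha(x^u) = \begin{cases} x^u \cdot x^{A_\sigma^{-1}(\alpha - A_{\sigma^c} u)} & \text{if } A_\sigma^{-1}(\alpha - A_{\sigma^c} u) \in \ZZ^\sigma, \\ 0 & \text{otherwise.} \end{cases}
\]
From the description of $p$ above, $L_\alpha(\overline{p}) = p$ tautologically. The proof then reduces to the inclusion $L_\alpha(\overline{I}) \subseteq IS$: granted it, $p = L_\alpha(\overline{p}) \in L_\alpha(\overline{I}) \subseteq IS \cap \kk[\NN^n] = I_\sigma$.

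To establish $L_\alpha(\overline{I}) \subseteq IS$, I expand an arbitrary element of $\overline{I}$ as $\sum_k h_k \overline{b_k}$, where $b_k = x^{u_k} x^{v_k} - \lambda_k x^{u_k'} x^{v_k'}$ are $A$-homogeneous binomial generators of $I$ and $h_k \in \kk[\NN^{\sigma^c}]$. By $\kk$-linearity of $L_\alpha$ and expansion of each $h_k$ into monomials, it suffices to verify $L_\alpha(x^w \overline{b}) \in IS$ for a single monomial $x^w$ and a single $A$-homogeneous binomial $b = x^u x^v - \lambda x^{u'} x^{v'} \in I$. The $A$-homogeneity of $b$ yields $A_\sigma^{-1} A_{\sigma^c}(u - u') = v' - v \in \ZZ^\sigma$, so the two lifted exponents $\gamma = A_\sigma^{-1}(\alpha - A_{\sigma^c}(w+u))$ and $\gamma' = \gamma + v' - v$ attached to the two monomials of $x^w \overline{b} = x^{w+u} - \lambda x^{w+u'}$ lie in $\ZZ^\sigma$ simultaneously. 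If they do not, then $L_\alpha(x^w \overline{b}) = 0 \in IS$. If they do, a short direct computation gives $L_\alpha(x^w \overline{b}) = x^{w + \gamma - v} \cdot b \in IS$, where $x^{w + \gamma - v}$ is a Laurent monomial in $S$.

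The main obstacle is carrying out this last computation carefully, tracking exponents in $\NN^{\sigma^c} \oplus \ZZ^\sigma$ and handling the piecewise definition of $L_\alpha$; the key conceptual point is that, although $L_\alpha$ is merely $\kk$-linear, the equality $\gamma' = \gamma + v' - v$ forced by $A$-homogeneity ensures that the two terms of $L_\alpha(x^w \overline{b})$ share a common $S$-monomial factor $x^{w + \gamma - v}$ and reassemble into $x^{w + \gamma - v} \cdot b$, which is manifestly in $IS$.
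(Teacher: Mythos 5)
Your proof is correct, but it takes a genuinely different route from the paper's. The paper argues the contrapositive by induction on the size of $\supp(p)$: it invokes Proposition~\ref{prop:lifting} to produce a partial lift $p'\in I$ of $\overline{p}$, extracts an $A$-homogeneous component $q$ of $p'$, and rescales by monomials in the $\sigma$-variables so that $x^up - x^vq$ has strictly smaller support, landing in $I_\sigma$ by induction. You instead bypass Proposition~\ref{prop:lifting} entirely: the standard identification $I_\sigma = IS\cap\kk[\NN^n]$ with $S$ the Laurent extension in the $\sigma$-variables, together with your explicit $\kk$-linear lift $L_\alpha$ (a section of dehomogenization on the degree-$\alpha$ part, well defined because $A_\sigma$ is square and invertible), reduces the whole statement to the single computation $L_\alpha(x^w\overline{b}) = x^{w+\gamma-v}\,b \in IS$ for one monomial times one $A$-homogeneous binomial generator; linearity of $L_\alpha$ then absorbs all the cancellation bookkeeping that the paper's induction (and the proof of Proposition~\ref{prop:lifting}) must handle by hand. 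Your computation checks out, including the key point that $\gamma' = \gamma + v' - v$ so the two lifted exponents are integral simultaneously, and it silently covers monomial generators as the degenerate case $\lambda = 0$. What each approach buys: yours is shorter, more conceptual, and makes transparent exactly where the hypothesis $|\sigma| = d = \rank(A_\sigma)$ enters; the paper's machinery produces lifts lying in $I$ itself rather than merely in $IS$, which is what Theorem~\ref{thm:whatThereIs} later requires, whereas for this proposition membership in $I_\sigma$ suffices and your argument is fully adequate. Two small points worth stating explicitly if you write this up: that $I$ admits a generating set of $A$-homogeneous binomials (split any inhomogeneous binomial generator into its two monomials), and the justification of $I_\sigma = IS\cap\kk[\NN^n]$ as contraction from the localization at $\prod_{i\in\sigma}x_i$.
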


\begin{proof}
We prove the contrapositive statement by 
induction on the cardinality of the support of $p$. If
$p=\lambda x^ux^m$, where $\lambda \in \kk^*, x^u \in
\kk[\NN^{\sigma}], x^m \in \kk[\NN^{\sigma^c}]$, and $\overline{p}=
\lambda x^m \in \overline{I}$, apply Proposition~\ref{prop:lifting} to
obtain a monomial $x^v\in \kk[\NN^{\sigma}]$ such that $\lambda x^vx^m
\in I$. This implies $x^m\in I_\sigma$, and therefore $p \in
I_\sigma$. 

Now let $p = \sum_{i=1}^r \lambda_i x^{u_i} x^{m_i} \in \kk[\NN^n]$, 
where $\lambda_i\in \kk^*, x^{u_i}\in \kk[\NN^\sigma], x^{m_i} \in
\kk[\NN^{\sigma^c}]$ for $i=1,\dots,r$ and 
$r \geq 2$.
Assume that $\overline{p} \in \overline{I}$. By
Proposition~\ref{prop:lifting}, there are $x^{v_1},\dots,x^{v_r} \in
\kk[\NN^\sigma]$ such that $p' = \sum \lambda_i x^{v_i} x^{m_i} \in
I$. Since $I$ is $A$-homogeneous, the homogeneous components of $p'$ belong
to $I$. Denote by $q$ one such component. As the matrix $A_\sigma$ is
invertible, we can find monomials $x^u, x^v \in \kk[\NN^\sigma]$ such
that $x^u p$ and $x^v q$ have the same $A$-degree. Now, let $i$ be such
that $\lambda_i x^{v_i} x^{m_i}$ is a monomial in $q$. Then $x^u
x^{u_i} x^{m_i}$ and $x^v x^{v_i} x^{m_i}$ have the same
$A$-degree. Using again the fact that $A_\sigma$ is invertible, we see
that $u+u_i = v+v_i$. This implies that the support of $x^u p-x^vq$ is
strictly contained in the support of $p$, and since $q \in I$,
the image $\overline{q}$ of $x^vq$ under setting to $1$ the variables
indexed by $\sigma$ belongs to $\overline{I}$. By induction, $x^up-x^vq \in I_{\sigma}$
Moreover, since $q\in I
\subset I_\sigma$, we see that $x^up \in I_\sigma$, and by definition
of $I_\sigma$, this implies that $p \in I_\sigma$.
\end{proof}

\begin{definition}
\label{d:weakWitness}
Fix $\sigma \subset [n]$ and a
binomial ideal $I \subset \kk[\NN^n]$. Set
$I_\sigma=\big(I:(\prod_{i\in \sigma}x_i)^\infty\big)$.
\begin{enumerate}[(a)]
\item 
A \emph{weak monomial $I$-witness for
$\mm_{\sigma^c}$} is a monomial $x^w \in \kk[\NN^{\sigma^c}]$, $x^w
\notin I$, such that there exists
$x^m \in \kk[\NN^\sigma]$ with the property that for each $i \notin
\sigma$, there are
a monomial $x^{q_i}$ and a scalar $\lambda_i \in \kk^*$ such
that $x_i (x^m x^w - \lambda_i x^{q_i}) \in I_\sigma$ but $x^mx^w
-\lambda_i x^{q_i} \notin I_\sigma$.  
\item 
A weak monomial $I$-witness $x^w$ for $\mm_{\sigma^c}$ is
\emph{essential} if there is a
polynomial $p \in \kk[\NN^n]$,
$p\notin I_\sigma$, and a
monomial $x^v \in \kk[\NN^\sigma]$ such that $x^vx^w$ is a monomial in
$p$, and
$x_j p \in I_\sigma$ for
all $j \notin \sigma$.
\end{enumerate}
\end{definition}

The difference between Definition~\ref{d:witness} and
Definition~\ref{d:weakWitness} is that the ideal $I$ is not assumed to
be positively graded. This is a profound difference, as weak monomial witnesses
are \textbf{not} monomial witnesses in the sense of~\cite{kmmeso},
because the conditions on divisibility imposed by the definitions in~\cite{kmmeso}
are not (necessarily) satisfied.

\begin{theorem}
\label{thm:whatThereIs}
Under the assumptions and notation of Convention~\ref{conv:savings}, 
a monomial $x^w \in \kk[\NN^{\sigma^c}]$ is an (essential)
monomial $I$-witness for $\mm_{\sigma^c} \subset \kk[\NN^n]$ if and
only if it is a weak
(essential) monomial $\overline{I}$-witness for $\mm_{\sigma^c}
\subset \kk[\NN^{\sigma^c}]$.
\end{theorem}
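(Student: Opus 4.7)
The plan is to use Propositions~\ref{prop:lifting} and~\ref{prop:nonLifting} as a translation device between $I_\sigma \subseteq \kk[\NN^n]$ and $\overline{I} \subseteq \kk[\NN^{\sigma^c}]$, via the projection $\pi \colon \kk[\NN^n] \to \kk[\NN^{\sigma^c}]$ sending $x_i \mapsto 1$ for $i \in \sigma$.  Two preliminary observations underpin the argument: first, $\pi(I_\sigma) = \overline{I}$, since $\overline{I} = \pi(I) \subseteq \pi(I_\sigma)$ is trivial, and conversely if $f \in I_\sigma$ then $(\prod_{i \in \sigma} x_i)^k f \in I$ for some $k$, so $\pi(f) \in \pi(I) = \overline{I}$; second, $\overline{I}$ is $\ZZ^d/L$-homogeneous for the sublattice $L = A_\sigma(\ZZ^\sigma)$, since each relation $x_i - 1$ (with $i \in \sigma$) is $\ZZ^d/L$-homogeneous of degree $0$.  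For the forward direction, given a monomial $I$-witness $x^w$ with data $x^m, x^{q_i}, \lambda_i$, applying $\pi$ to the membership $x_i(x^m x^w - \lambda_i x^{q_i}) \in I_\sigma$ yields $x_i(x^w - \lambda_i \pi(x^{q_i})) \in \overline{I}$, while the non-memberships $x^w \notin I_\sigma$ and $x^m x^w - \lambda_i x^{q_i} \notin I_\sigma$ (both $A$-homogeneous) transfer to the corresponding non-memberships in $\overline{I}$ via Proposition~\ref{prop:nonLifting}.  This exhibits $x^w$ as a weak $\overline{I}$-witness, and the essential case follows analogously: since $A_\sigma$ is invertible, $\pi$ is injective on the monomials of any $A$-homogeneous $p \in \kk[\NN^n]$, so $\pi(p)$ retains $x^w$ in its support with the remaining conditions transferring verbatim.

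For the backward direction on monomial witnesses, Proposition~\ref{prop:lifting} applied to each $x_i(x^w - \lambda_i x^{q_i'}) \in \overline{I}$ produces $v_i, u_i \in \NN^\sigma$ with $x_i(x^{v_i} x^w - \lambda_i x^{u_i} x^{q_i'}) \in I$.  Setting $x^m = \lcm_i(x^{v_i})$ and $x^{q_i} = x^{m - v_i + u_i} x^{q_i'}$ and multiplying each lift by the appropriate remaining monomial factor yields $x_i(x^m x^w - \lambda_i x^{q_i}) \in I \subseteq I_\sigma$ with a common $x^m$ working for every $i \in \sigma^c$.  The required non-memberships $x^w \notin I_\sigma$ and $x^m x^w - \lambda_i x^{q_i} \notin I_\sigma$ follow from Proposition~\ref{prop:nonLifting}, since $\pi$ sends these $A$-homogeneous elements to $x^w$ and $x^w - \lambda_i x^{q_i'}$ respectively, neither of which lies in $\overline{I}$.

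For the essential backward direction, given a weak essential witness polynomial $\overline{p}$ containing $x^w$ in its support, decompose $\overline{p} = \sum_\gamma \overline{p}_\gamma$ into $\ZZ^d/L$-homogeneous components.  By $\ZZ^d/L$-homogeneity of $\overline{I}$, the component $\overline{p}_{\gamma_0}$ of degree $\gamma_0$ equal to the class of $Aw$ satisfies $x_j \overline{p}_{\gamma_0} \in \overline{I}$ for every $j \in \sigma^c$ and contains $x^w$.  When $\overline{p}_{\gamma_0} \notin \overline{I}$, one $A$-homogenizes: since every monomial $x^{w_k}$ of $\overline{p}_{\gamma_0}$ has $A w_k$ in the class $\gamma_0$ modulo $L$, there exist $v_k \in \NN^\sigma$ with $A v_k + A w_k$ equal to a common sufficiently large constant, producing an $A$-homogeneous $p = \sum c_k x^{v_k} x^{w_k} \in \kk[\NN^n]$ with $\pi(p) = \overline{p}_{\gamma_0}$; then $p \notin I_\sigma$ follows from $\pi(I_\sigma) = \overline{I}$, and $x_j p \in I_\sigma$ follows from the contrapositive of Proposition~\ref{prop:nonLifting} applied to the $A$-homogeneous $x_j p$.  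The hard part is handling the residual case $\overline{p}_{\gamma_0} \in \overline{I}$, which can occur precisely because Definition~\ref{d:weakWitness} permits non-$\ZZ^d/L$-homogeneous $\overline{p}$; addressing it requires exploiting the weak witness relations $x_i(x^w - \lambda_i x^{q_i'}) \in \overline{I}$ (with each $x^{q_i'}$ also of class $\gamma_0$) to construct by hand a $\ZZ^d/L$-homogeneous essential witness polynomial of degree $\gamma_0$ with $x^w$ in its support.
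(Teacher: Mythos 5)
Your forward direction and your backward direction for (non-essential) monomial witnesses are correct and essentially the same as the paper's: it likewise pushes the witness binomials down with the evaluation map (using Proposition~\ref{prop:nonLifting} for the non-memberships), lifts them back with Proposition~\ref{prop:lifting}, and takes an lcm of the $\sigma$-parts to obtain a single $x^m$. Your preliminary observations that the image of $I_\sigma$ is $\overline{I}$ and that $\overline{I}$ is graded by $\ZZ^d/A_\sigma\ZZ^\sigma$ are both correct, and they make explicit bookkeeping that the paper leaves implicit.

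The gap is in the essential backward direction, where your write-up stops short of a proof. You reduce to the $\ZZ^d/L$-homogeneous component $\overline{p}_{\gamma_0}$ containing $x^w$ and handle the case $\overline{p}_{\gamma_0}\notin\overline{I}$ correctly (the $A$-homogenization using invertibility of $A_\sigma$ and the contrapositive of Proposition~\ref{prop:nonLifting} applied to the $A$-homogeneous $x_jp$ are fine), but you then explicitly concede that the residual case $\overline{p}_{\gamma_0}\in\overline{I}$ is ``the hard part'' and only describe what a proof ``would require.'' That case is exactly where the content lies: Definition~\ref{d:weakWitness} deliberately permits inhomogeneous witness polynomials, so nothing forces the graded component of $\overline{p}$ in the class of $x^w$ to avoid $\overline{I}$, and you have not shown how to manufacture from the witness binomials a suitable polynomial whose component containing $x^w$ survives. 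The paper is organized differently at precisely this point: it does not decompose downstairs, but instead applies Proposition~\ref{prop:lifting} to the full inhomogeneous polynomial $x_{i_0}g$ to get a lift $p'$ with $x_{i_0}p'\in I$ and $p'\notin I_\sigma$, argues $x_jp'\in I_\sigma$ for the remaining $j\in\sigma^c$, and only then extracts the $A$-homogeneous component of $p'$ containing $x^{v_1}x^w$. (Note that even that route must still justify that this particular component avoids $I_\sigma$, so you have correctly located the crux of the argument --- but locating it is not the same as resolving it.) Until you either carry out your ``construct by hand'' step or rework the argument along the paper's lines, the backward implication for essential witnesses remains unproved.
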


\begin{proof}
Let $x^w \in \kk[\NN^{\sigma^c}]$ be a monomial $I$-witness for
$\mm_{\sigma^c} \subset \kk[\NN^n]$. Using
Proposition~\ref{prop:nonLifting}, we see that
the images under setting to $1$ the variables indexed by $\sigma$ of
the auxiliary binomials required for $x^w$ to be a monomial $I$-witness,
satisfy the conditions required for $x^w$ to be a weak monomial
$\overline{I}$-witness. If $x^w$ is an essential monomial $I$-witness,
the image of the auxiliary polynomial $p$ serves to verify that $x^w$
is an essential weak monomial $\overline{I}$-witness.

Now assume that $x^w$ is a weak monomial $\overline{I}$-witness for
$\mm_{\sigma^c} \subset \kk[\NN^{\sigma^c}]$, and
for each $i \in \sigma^c$, let $\lambda_i \in \kk^*$ and $x^{q_i} \in
\kk[\NN^\sigma]$ such that $x_i(x^w - \lambda_i x^{q_i}) \in
\overline{I}$ but $x^w -\lambda_i x^{q_i} \notin \overline{I}$. By
Propositions~\ref{prop:lifting} and~\ref{prop:nonLifting} there are
monomials $x^{u_i}, x^{v_i} \in \kk[\NN^\sigma]$ such that
$x_i(x^{u_i}x^w-\lambda_i x^{v_i}x^{q_i}) \in I \subset I_\sigma$ and
$(x^{u_i}x^w-\lambda_i x^{v_i}x^{q_i}) \notin I_\sigma$. Taking $x^u$
to be the least common multiple of the $x^{u_i}$, we see that the
binomials $x^u x^w - \lambda_i x^{u-u_i} x^{v_i} x^{q_i}$ satisfy the
properties necessary to ensure that $x^w$ is a monomial $I$-witness
for $\mm_{\sigma^c} \subset \kk[\NN^n]$.

Finally, if $x^m$ is a weak essential monomial $\overline{I}$-witness
for $\mm_{\sigma^c} \subset \kk[\NN^{\sigma^c}]$,
then in particular $x^w$ is a weak monomial $\overline{I}$-witness for
$\mm_{\sigma^c} \subset \kk[\NN^{\sigma^c}]$, and by the previous
argument, it is a monomial $I$-witness for $\mm_{\sigma^c} \subset \kk[\NN^n]$.
Now let $g=\sum_{i=1}^r \mu_i x^{m_i} \in \kk[\NN^{\sigma^c}]$,
$\mu_1,\dots,\mu_r \in \kk^*$, such that
$g \notin \overline{I}$, $m_1 = w$, and $x_ig
\in \overline{I}$ for all $i \in \sigma^c$. Fix $i_0 \in \sigma^c$. By
Propositions~\ref{prop:lifting} and~\ref{prop:nonLifting} applied to
$x_{i_0}g$, there are
monomials $x^{v_1},\dots,x^{v_r} \in \kk[\NN^{\sigma}]$ such that 
$p' = \sum_{i=1}^r \mu_i x^{v_i}x^{m_i} \notin I_\sigma$ and $x_{i_0} p'
\in I \subset I_\sigma$. By Proposition~\ref{prop:nonLifting}, if $x_j
\in \sigma^c$, $j \neq i_0$, the fact that $x_jg \in \overline{I}$ implies that $x_j
p' \in I_\sigma$. Now let $p$ be the $A$-homogeneous component of $p'$
containing the monomial $x^{v_1}x^{m_1} = x^{v_1} x^w$. Since $I$ and
$I_\sigma$ are $A$-homogeneous, the polynomial $p$ satisfies the conditions
necessary to ensure that $x^m$ is an essential monomial $I$-witness
for $\mm_{\sigma^c} \subset \kk[\NN^n]$.
\end{proof}

\section{The toral part of a binomial primary decomposition}
\label{sec:Itoral}

As~\cite[Proposition~6.4]{dmm2} shows, it sometimes makes sense to discard the Andean
components of an $A$-homogeneous binomial ideal. The goal of this section
is to show that this process may not result in a binomial ideal (Example~\ref{e:nonbinomialItoral}).

\begin{definition}
\label{d:Itoral}
Fix an $A$-homogeneous binomial ideal $I \subset \kk[\NN^n]$, where $\kk$
is algebraically closed. Let $I = \cap_{\ell=1}^r J_{\ell}$
and a binomial primary
decomposition, where $J_1,\dots,J_t$ are toral and $J_{t+1},\dots,J_r$
are Andean.
The \emph{toral part} of (this decomposition of) $I$, denoted
$I_\toral$, equals the intersection $\cap_{\ell}^{t} J_\ell$ of the toral 
components (cf.~\cite[Proposition~6.4]{dmm2}). 
\end{definition}

Since embedded primary components are not uniquely determined, the
ideal $I_\toral$ in Definition~\ref{d:Itoral} depends on the primary decomposition
unless all the toral associated primes of $I$ are
minimal.

\begin{lemma}
Let $I$ be an $A$-homogeneous binomial ideal in $\kk[\NN^n]$, and let
$I=\cap_{\ell=1}^p I_\ell$ be a mesoprimary decomposition of $I$.
Assume that $I_1,\dots,I_q$ are toral and $I_{q+1},\dots,I_p$ are
Andean, and consider $J=\cap_{\ell=1}^q I_\ell$. Then $J$ equals
$I_{\textnormal{toral}}$ for the primary decomposition of $I$ obtained
by primary decomposing the mesoprimary components $I_\ell$. 
\end{lemma}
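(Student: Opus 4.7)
My plan is to reduce the statement to checking that toral/Andean status is preserved when passing from a mesoprimary ideal to its primary components, after which the result is a bookkeeping reassembly of intersections.

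First, I would apply Proposition~\ref{p:mesoprimaryPrimDec} to each mesoprimary component, writing $I_\ell = \bigcap_j I_{\ell,j}$, where each $I_{\ell,j}$ is primary with associated prime equal to a minimal prime of the mesoprime of $I_\ell$. Concatenating these gives the primary decomposition
\[
I \;=\; \bigcap_{\ell=1}^p \bigcap_j I_{\ell,j}
\]
referenced in the statement. Note that each $I_{\ell,j}$ contains $I_\ell$, and that $I_{\ell,j} \cap \kk[\NN^\sigma]$ has the same underlying saturated lattice as the lattice ideal component of $I_\ell$.

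Next, I would verify that $I_{\ell,j}$ is toral (respectively, Andean) if and only if $I_\ell$ is. For the toral direction this is immediate from Lemma~\ref{lemma:embeddedToral} applied to the inclusion $I_\ell \subseteq I_{\ell,j}$. For the Andean direction, I would invoke Corollary~\ref{c:toralConditions}: every prime associated to an Andean mesoprimary ideal is Andean, so the associated prime $P_{\ell,j}$ of $I_{\ell,j}$ is Andean whenever $\ell > q$. Since $I_{\ell,j}$ is $P_{\ell,j}$-primary we have $\dim(\kk[\NN^n]/I_{\ell,j}) = \dim(\kk[\NN^n]/P_{\ell,j})$, and both ideals share the same distinguished set of cellular variables, so criterion~\ref{l:toralConditionsDimensionRank} of Lemma~\ref{l:toralConditions} forces $I_{\ell,j}$ to be Andean as well.

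Finally, $I_{\text{toral}}$ is by definition the intersection of the toral primary components of the chosen primary decomposition. By the preceding step these are exactly the $I_{\ell,j}$ with $\ell \le q$, giving
\[
I_{\text{toral}} \;=\; \bigcap_{\ell=1}^q \bigcap_j I_{\ell,j} \;=\; \bigcap_{\ell=1}^q I_\ell \;=\; J,
\]
which is the desired identity. The only delicate point is the Andean half of the matching step, since Lemma~\ref{lemma:embeddedToral} is one-directional; but the primary-to-prime dimension equality makes this direction routine once Corollary~\ref{c:toralConditions} is in hand.
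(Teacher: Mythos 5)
Your proof is correct in its main thrust and rests on the same key fact as the paper's: by Corollary~\ref{c:toralConditions}, toral and Andean mesoprimary ideals have disjoint collections of associated primes, so each primary component produced by Proposition~\ref{p:mesoprimaryPrimDec} inherits the toral/Andean label of the mesoprimary ideal it refines. You make this inheritance explicit --- Lemma~\ref{lemma:embeddedToral} in one direction, the dimension criterion of Lemma~\ref{l:toralConditions} in the other --- whereas the paper leaves it implicit; a slightly shorter route is the saturated-lattice criterion of Lemma~\ref{l:toralConditions}, since $I_{\ell,j}\cap\kk[\NN^\sigma]$ and $I_\ell\cap\kk[\NN^\sigma]$ have the same saturation. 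Where you and the paper diverge is the step you dismiss as ``bookkeeping reassembly'': the paper's proof is devoted entirely to it. The concatenated family $\{I_{\ell,j}\}$ need not be an irredundant primary decomposition of $I$ --- some $I_{\ell,j}$ may fail to be components of $I$ at all, and primary ideals with equal radicals arising from different mesoprimary pieces must be merged --- and eliminating such redundancies is subtle in general \cite[Remark~16.11]{kmmeso}. The observation that rescues the lemma is that any such merging or cancellation takes place entirely among the toral-derived components or entirely among the Andean-derived ones, again by disjointness of the two classes of associated primes, so the cleanup does not disturb $\bigcap_{\ell\le q}I_\ell$. If you read Definition~\ref{d:Itoral} as allowing the raw, possibly redundant concatenation, your argument is complete as written; otherwise you should add a sentence addressing why discarding or merging redundant components leaves the toral intersection unchanged.
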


\begin{proof}
The reason this is not immediate is that, when the mesoprimary
components $I_\ell$ are primary decomposed, some of the resulting
primary ideals may not be components of $I$. The question of how to
eliminate possible redundancies in this process is a subtle one
\cite[Remark~16.11]{kmmeso}; however, in this case, we need only
observe that cancellations cannot occur between Andean and toral
mesoprimary components, as the corresponding collections of associated
primes are disjoint.
\end{proof}

\begin{example}
\label{e:nonbinomialItoral}
It is possible for the toral part of a binomial primary decomposition
to not be a binomial ideal, even when all the toral associated primes
are minimal.
Let 
$$I=\<x_3^4x_5-x_4^3x_6^3,x_3^5x_6-x_4^5x_5^2,x_1x_5^3-x_2^4x_6,x_1^4x_6^5-x_2^2x_5^2\> \subseteq \kk[x_1,\dots,x_6].$$
The ideal $I$ is (positively) 
$A$-homogeneous, for the matrix $A=\left[ \begin{smallmatrix} 5 & 5 & -11 & -13 & 5 & 0 \\ 60 & 73 &
  -130 & -160 & 82 & 14 \end{smallmatrix}\right]$
and has seven associated
primes. Five of these are toral, and all those are minimal, which
means that their corresponding primary 
components are uniquely determined. Consequently,
$I_{\textnormal{toral}}$ is independent of the primary
decomposition of $I$. In this example, it can be verified using the
\texttt{Macaulay2} package \texttt{Binomials} that
$I_{\textnormal{toral}}$ is not binomial.
\endrk
\end{example}

\begin{remark}
\label{rmk:dmmWrongAgain}
While working on this article, we found a small error
in~\cite[Example~4.10]{dmm}.

In~\cite[Example~4.10]{dmm}, it is incorrectly claimed that the dimension of any
associated prime of a lattice basis ideal $I$ 
is at least the rank $d$ of its grading matrix. This error leads
to the false conclusion that all 
toral associated primes of a lattice basis ideal $I$ have dimension exactly $d$ and are
therefore minimal. Consequently, the 
description in~\cite{dmm} does not capture all
of the toral associated primes of a
lattice basis ideal (or even all of the minimal ones).

Moreover, toral primes of a lattice basis ideal may be
embedded. (For example, $\<x_1,x_2,x_4\>$ is a toral embedded
prime of the lattice basis ideal
$$I=\<x_1^3-x_2^2x_3,x_1^2-x_2x_4^5,x_1^2-x_2^2x_5^3\>\subset
\kk[x_1,\dots,x_5]$$
with grading matrix 
$A=
\left[ \begin{smallmatrix} \phantom{-}5 & \phantom{-}5 & 5& 1 & 0 \\
-3 & -6 & 3 & 0 & 2 \end{smallmatrix} \right]$
.)
For this reason, a complete 
description of the toral associated primes of a lattice basis ideal is
not feasible using only the combinatorics of its defining matrix
matrix, since, as~\cite[Example~3.1]{HS} shows, it is not possible to
determine the embedded primes of a lattice basis ideals from the sign
patterns of the entries of the underlying matrix.

We remark that this error in~\cite{dmm} is carried over
to~\cite[Section~7]{dmm2}, although the only false result there is~\cite[Lemma~7.2]{dmm2}.
This affects the statements of~\cite[Lemma~7.4, Proposition~7.6, Theorem
7.14, Theorem~7.18, Corollary~7.25]{dmm2}, in which a certain matrix $M$ is
assumed to be square invertible, an assumption that comes from the
incorrect~\cite[Lemma~7.2]{dmm2}. Fortunately, none of these results
actually need the assumptions on $M$, and the only modification needed
is in the verification that the second display of the proof of 
of~\cite[Theorem~7.14]{dmm2} is valid. We also point out that the display
in~\cite[Example~3.7]{dmm2} should read $\mathscr{C}_{\rho,J} =
((I(B)+I_\rho):\del_J^\infty)+U_M$. 
\endrk
\end{remark}


\end{document}